\documentclass[12pt,reqno]{amsart}
\textwidth=14.5cm  \oddsidemargin=0.5cm
\usepackage{graphicx}
\usepackage{psfrag}
\usepackage{pxfonts}
\usepackage{mathrsfs}
\usepackage{color}
\usepackage{amsmath,amssymb}


\numberwithin{equation}{section}

\theoremstyle{plain}
\newtheorem{maintheorem}{Theorem}


\newcommand{\Z}{\mathbb{Z}}

\newtheorem{theorem}{Theorem}[section]

\newtheorem{corollary}[theorem]{Corollary}
\newtheorem{proposition}[theorem]{Proposition}
\newtheorem{lemma}[theorem]{Lemma}
\newtheorem{definition}[theorem]{Definition}

\newtheorem{example}{Example}

\theoremstyle{remark}


\begin{document}

\thanks{This research has no funding. The authors thank the referees for the careful reading and the numerous useful comments and suggestions.}

\author{J. S. C. Costa}
\address{DEMAT-UFMA S\~{a}o Lu\'{i}s-MA, Brazil.}
\email{jsc.costa@ufma.br}

\author[F. Micena]{F. Micena}
\address{
  IMC-UNIFEI Itajub\'{a}-MG, Brazil.}
\email{fpmicena82@unifei.edu.br}


\renewcommand{\subjclassname}{\textup{2000} Mathematics Subject Classification}

\date{\today}

\setcounter{tocdepth}{2}

\title{Some Generic Properties of Partially Hyperbolic Endomorphisms}
\maketitle
\begin{abstract}
In this work, we deal with a notion of partially hyperbolic endomorphism. We explore topological properties of this definition and we obtain, among other results, obstructions to get center leaf conjugacy with the linear part, for a class of partially hyperbolic endomorphism $C^1-$sufficiently close to a hyperbolic linear endomorphism. Indeed such obstructions are related to the number of center directions of a point. We provide examples illustrating these obstructions.  We show that for a manifold $M$ with dimension $n \geq 3,$ admitting a non-invertible partially hyperbolic endomorphisms, there is a  $C^1$ open and dense subset $\mathcal{U}$ of all partially hyperbolic endomorphisms with degree $d \geq n,$ such that any $f \in \mathcal{U}$ is neither $c$ nor $u$ special.
\end{abstract}

\section{Introduction and Statements of the Results}\label{section.preliminaries}

In $1970s,$ the works \cite{PRZ} and \cite{MP75}  generalized the notion of Anosov diffeomorphism for non-invertible maps, introducing the notion of Anosov endomorphism. We consider $M$ a $C^{\infty}$ closed manifold. 

Let $f: M \rightarrow M$ be a  $C^1$ local diffeomorphism. We say that $f$ is an Anosov endomorphism if for every $(x_n)_{n \in \mathbb{Z}},$ an $f-$orbit, this is $f(x_n)=x_{n+1}$,  there is a splitting

$$T_{x_i} M = E^s_f(x_i) \oplus E^u_f(x_i), \forall i \in \mathbb{Z},$$
such that $Df$ is uniformly expanding on $E^u_f$ and uniformly contracting on $E^s_f.$

Anosov endomorphisms satisfy some peculiar properties that give them characteristics quite different from the Anosov diffeomorphism. As  definition above, given $f: M \rightarrow M$ an Anosov endomorphism and $ \bar{x} = (x_n)_{n \in \mathbb{Z}}, \bar{y} = (y_n)_{n \in \mathbb{Z}} $ two different orbits for $f$ such that $x_0 = y_0,$ it is possible that $E^u_f(x_0) \neq E^u_f(y_0),$ where $E^u_f(x_0)$ is the unstable direction defined by $\bar{x} $ and $E^u_f(y_0)$ is the unstable direction defined by $\bar{y}.$ By \cite{PRZ}, these directions are integrable to unstable local discs, so at a point $x$ we can have more than one local unstable manifold. This fact is an obstruction to the structural stability of Anosov endomorphism, see \cite{PRZ} or \cite{MP75}.  In fact, in \cite{MT16}, the authors showed that for a transitive Anosov endomorphism, either it is special (at every point is defined a unique unstable direction) or for a residual set $R \subset M,$ every point in $R$ has defined infinitely many unstable directions.

Recently, partially hyperbolic endomorphisms are being studied, for example, see \cite{HH21} and \cite{HE19}. Here we present and explore a kind of partially hyperbolic
endomorphism. In fact, this concept is a natural generalization of the concept of Anosov endomorphism. Roughly speaking, a local diffeomorphism $f: M \rightarrow M$ is considered a partially hyperbolic endomorphism if for any orbit $\bar{x} = (x_n)_{n \in \mathbb{Z}},$ we have the decomposition

$$T_{x_i} M = E^s_f(x_i) \oplus E^c_f(x_i) \oplus  E^u_f(x_i), \forall i \in \mathbb{Z},$$
where $Df$ is uniformly expanding on $E^u_f,$ uniformly contracting on $E^s_f$ and $E^c_f$ is a direction with intermediate behavior. The details we will give in the
Section 2.

It is not to hard to see that for any point we have defined a unique $E^s_f$ direction.
\begin{definition}
Let $f:M\rightarrow M$ be a partially hyperbolic endomorphism. We say that $f$ is $u-$special if for all $x\in M,$ the unstable direction $E^u_f(x)\subset T_xM$ is independent of the orbit $\bar{x} = (x_n)_{n \in \mathbb{Z}},$ such that $x_0 = x.$ Similarly, we define the concepts of a such $f$ being $c-$special and $cu-$special.
\end{definition}

\begin{definition}
Let $f:M\rightarrow M$ be a partially hyperbolic endomorphism. We say that $f$ is special if it is simultaneously $c$ and $u-$special.
\end{definition}

An Anosov automorphism $A: \mathbb{T}^3 \rightarrow \mathbb{T}^3$ with three eigenvalues $0 <  |\lambda_1| < 1 <  |\lambda_2| < |\lambda_3| $ and $|\lambda_1 \cdot \lambda_2 \cdot \lambda_3 | > 1,$ is special.

It is well known that for partially hyperbolic diffeomorphisms,
there are foliations $W^s_f$ and $W^u_f$ tangent to $E^s_f$ and $E^u_f,$ respectively, but the distribution $E^c_f$ may not be integrable, for instance see section 6.1 of \cite{pesin2004lectures}.
If $E^c_f$ is  one-dimensional, then it is integrable, but not necessarily uniquely integrable (see \cite{HHU2}). In \cite{B},  M. Brin shows that for (absolute) partially hyperbolic diffeomorphisms for which the stable and unstable foliations have a geometrical condition (quasi-isometry), then $E^c_f$ is uniquely integrable, that is, there
is a foliation $W^c_f$ tangent to $E^c_f.$ The existence of a center foliation is the corn of the dynamical coherence property. As in our definition partially hyperbolic endomorphisms are local diffeomorphisms, then for all orbit $\bar{x} = (x_n)_{n \in \mathbb{Z}},$ there is a stable and unstable manifolds at $x_0.$ Analogous questions arise concerning dynamical coherence in the endomorphism setting.

For all $f: M \rightarrow M$  is  possible to define the space \mbox{$M^f=\{(x_n)_{n \in \mathbb{Z}};\, x_n\in M, f(x_n)=x_{n+1}\}$}, endowed with the metric $\bar{d}$

$$
\bar{d}(\bar{x},\bar{y})=\displaystyle\sum_{i\in {\Z}}\frac{d(x_i,y_i)}{2^{|i|}},
$$
where $d$ is the Riemannian metric in $M$. The space $M^f$ is called the inverse limit space of $M$ with respect to $f.$  If $f$ is  continuous and $M$ is compact, then $(M^f, \bar{d})$ is compact. Given $\bar{x} = (x_n)_{n \in \Z},$ we say that $x_n$ is the $n-$coordinate of $\bar{x} .$ In $M^f$ we can define a natural extension of $f,$ denoted by $\bar{f}:M^f \rightarrow M^f, $ such that, given $\bar{x} = (x_n)_{n \in \Z}$ we have $\bar{f}(\bar{x}) = \bar{y} = (y_n)_{n \in \Z},$ where $y_{n} = x_{n+1}.$ Since $f$ is continuous it is possible to prove that $\bar{f}$ is continuous.

Analogous to Anosov endomorphisms,  we can prove that there are partially hyperbolic endomorphisms $f: M \rightarrow M$ admitting points $x\in M$ for which there are defined several unstable and central directions. Consequently, may there be several unstable and possibly many center manifolds.

In the present work, our main results are the followings.

\begin{maintheorem}\label{Teo C}
Let  $f:M\rightarrow M$ be a transitive partially hyperbolic endomorphism. For each $\sigma \in \{u,c\}$  the following dichotomy holds:
\begin{itemize}
\item Either $f$ is a $\sigma-$special partially hyperbolic endomorphism.
\item Or there exists a residual subset $\mathcal{R}\subset M,$ such that for every $x \in \mathcal{R},$ $x$ has
infinitely many $\sigma$-directions.
\end{itemize}

\end{maintheorem}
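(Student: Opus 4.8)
The plan is to reduce the whole statement to the study of a single integer‑valued function on $M$. For $\sigma\in\{u,c\}$ and an orbit $\bar{x}=(x_n)_{n\in\Z}\in M^f$, write $E^\sigma_f(\bar{x})\subset T_{x_0}M$ for the $\sigma$-direction it determines; by the invariant cone and graph‑transform construction underlying the splitting, the map $\bar{x}\mapsto E^\sigma_f(\bar{x})$ into the Grassmannian bundle of $M$ is continuous on $M^f$. Letting $\pi:M^f\to M$, $\pi(\bar{x})=x_0$, be the natural projection, I define $N_\sigma(x)=\#\{E^\sigma_f(\bar{x}):\bar{x}\in\pi^{-1}(x)\}\in\N\cup\{\infty\}$, the number of distinct $\sigma$-directions at $x$. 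Since $E^s_f$ is orbit‑independent, $f$ is $\sigma$-special precisely when $N_\sigma\equiv 1$, so the theorem becomes a statement about the level sets of $N_\sigma$.

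Next I would establish two structural properties of $N_\sigma$. First, lower semicontinuity: if $N_\sigma(x)\ge k$, choose backward orbits realizing $k$ distinct, hence pairwise transverse, directions; since $f$ is a local diffeomorphism, each initial segment of such a backward orbit continues uniquely and continuously to a nearby base point $y$, and continuity of $E^\sigma_f$ then keeps the $k$ directions distinct for $y$ close to $x$. Hence $A_k:=\{N_\sigma\ge k\}$ is open. Second, forward monotonicity: because $x$ is one of the preimages of $f(x)$ and $Df_x$ is injective on directions, pushing the directions at $x$ forward yields $N_\sigma(f(x))\ge N_\sigma(x)$, so $f(A_k)\subseteq A_k$. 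Combining these with transitivity gives density: a transitive $f$ on the compact $M$ has a point whose forward orbit is dense, this orbit eventually enters the nonempty open set $A_k$, and by forward invariance its whole tail, still a dense set, lies in $A_k$; hence each nonempty $A_k$ is dense. Consequently, if every $A_k$ is nonempty, then $\mathcal{R}=\bigcap_{k\ge 1}A_k$ is a dense $G_\delta$, i.e. a residual set, on which $N_\sigma=\infty$.

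With this machinery the dichotomy reduces to one claim: if $f$ is not $\sigma$-special, then $A_k\neq\emptyset$ for all $k$, i.e. $N_\sigma$ is unbounded. Starting from a point with two directions, the observation that the two realizing backward orbits first disagree at some preimage lets me reduce to a branch point $w$: a point admitting two preimages $a\neq b$ which, suitably continued, yield distinct $\sigma$-directions at $w$. By openness this branching is robust on an open set $U$. The idea is then to multiply directions: using transitivity to make a backward orbit of a fixed point $p$ revisit the branching region $U$ at a prescribed increasing sequence of past times, and switching the branch at each visit, one produces a family of backward orbits indexed by their itineraries, hence a family of $\sigma$-directions at $p$ whose cardinality I want to force to infinity.

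The main obstacle is exactly the distinctness of the directions so produced. Switching a branch at a remote past time $-n$ perturbs $E^\sigma_f(\bar{p})$ only by an exponentially small amount, because the projective action of $Df$ along the orbit contracts the relevant Grassmannian toward $E^\sigma$ (for $\sigma=u$ this is the expansion of the unstable cone; for $\sigma=c$ it is the domination of $E^c$ over $E^s$ inside $E^{cs}$). Thus different itineraries might in principle yield the same direction, and the count need not grow. To overcome this I would locate a \emph{periodic} branch point, obtained on the hyperbolic part by a shadowing or closing argument, and code its backward branch choices: along a periodic backward orbit the direction is the fixed point of a single contraction, so distinct codings map to distinct points of an associated Cantor‑type set of directions, producing genuinely infinitely many $\sigma$-directions at $p$ and hence $A_k\neq\emptyset$ for every $k$. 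The quantitative angle estimate guaranteeing that different codings never collapse to the same direction is where the real work lies; once it is in place, the two alternatives follow, uniformly in $\sigma\in\{u,c\}$, from the dichotomy $N_\sigma\equiv1$ versus $N_\sigma$ unbounded.
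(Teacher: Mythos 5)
Your reduction of the theorem to the single claim ``if $f$ is not $\sigma$-special then $N_\sigma$ is unbounded'' is sound, and your steps establishing openness of $A_k=\{N_\sigma\ge k\}$, monotonicity of $N_\sigma$ along forward orbits, and density of each nonempty $A_k$ via a dense forward orbit coincide with what the paper does (the final residual set $\mathcal{R}=\bigcap_k A_k$ is built the same way there, as $\bigcap_k\bigcup_{i\ge1}f^i(U^k_x)$). The problem is that the one claim everything hinges on is exactly the one you do not prove. Your strategy --- switch backward branches at a sequence of past times and count itineraries --- runs head-on into the collapse phenomenon you yourself identify: a branch switch at time $-n$ moves the resulting $\sigma$-direction by an exponentially small amount, so distinct itineraries need not give distinct directions. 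Your proposed repair (a periodic branch point obtained by ``shadowing or closing,'' plus a coding into a Cantor set of directions) is not an argument: closing lemmas are not available for general partially hyperbolic endomorphisms, ``distinct codings give distinct fixed points of contractions'' is false without a uniform separation estimate at the first divergence time dominating the tail, and you explicitly defer that estimate as ``where the real work lies.'' That estimate \emph{is} the theorem; as written, the proof has a hole at its center.

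It is worth seeing how the paper avoids this difficulty entirely, because the mechanism is the opposite of yours. Assuming $N_\sigma(x)<\infty$ for every $x$, the sets $\Lambda_k=\{N_\sigma\le k\}$ are closed, so Baire gives some $\Lambda_{k_0}$ with nonempty interior; combining the dense orbit with the forward monotonicity of $N_\sigma$ then forces $M=\Lambda_{k_0}$, i.e.\ a \emph{uniform bound} $k_0$. Now if $f$ is not $\sigma$-special, pick $x$ with two $\sigma$-directions at angle $\theta$ and a neighborhood $U_x$ on which every point carries two $\sigma$-directions at angle $>\theta/2$. The paper's Proposition on angles (the one showing that $Df^n$ applied to two distinct $\sigma$-directions at the same point makes their angle tend to $0$) guarantees that when a dense orbit enters $U_x$ at a late time $n_1$, all $N_\sigma(x_0)$ pushed-forward directions are clustered within angle $\theta/3$, while $U_x$ forces a pair separated by more than $\theta/2$; hence at least one direction at $f^{n_1}(x_0)$ is \emph{new}, and $N_\sigma(f^{n_1}(x_0))\ge N_\sigma(x_0)+1$. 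Iterating contradicts the bound $k_0$. Note that this produces new directions without ever having to certify that two specific backward itineraries yield different directions --- the forward contraction of angles, which is the obstruction in your approach, is precisely the tool in theirs. To complete your write-up you should replace your coding construction by this Baire-plus-angle-collapse argument, or else actually prove the separation estimate you postponed.
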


It is important to note that the above Theorem \ref{Teo C} is independent of the dimension of the center bundle.

In the next theorem, we exhibit classes of partially hyperbolic endomorphisms satisfying  Theorem \ref{Teo C} and in some sense, these examples are $C^1-$residual.
\begin{maintheorem}\label{Teo D}  Let $M$ be a  smooth closed manifold with dimension $n \geq 3,$ admitting a non-invertible partially hyperbolic endomorphisms. Then there is a  $C^1$ open and dense subset $\mathcal{U}$ of all partially hyperbolic endomorphisms with degree $d \geq n,$ such that any $f \in \mathcal{U}$ is neither $c$ nor $u$ special.
\end{maintheorem}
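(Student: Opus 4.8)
The plan is to establish the two defining features of $\mathcal{U}$ separately, namely that the property of being \emph{neither} $c$- \emph{nor} $u$-special is $C^1$ open and that it is $C^1$ dense. Throughout I use that partial hyperbolicity of an endomorphism is itself a $C^1$-open condition, so a sufficiently small $C^1$ perturbation of a partially hyperbolic endomorphism remains one, with the bundles $E^s_f, E^c_f, E^u_f$ depending continuously on the map and on the chosen orbit in $M^f$. For openness, suppose $f$ is not $u$-special: by definition there are a point $x$ and two orbits $\bar{x},\bar{y}\in M^f$ with $x_0=y_0=x$ but $E^u_f(\bar{x})\neq E^u_f(\bar{y})$. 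Since these unstable directions depend continuously on $f$ in the $C^1$ topology and on the base orbit, the strict inequality persists for all $g$ that are $C^1$-close to $f$; hence every such $g$ is again not $u$-special, and the same reasoning applied to $E^c$ shows that ``neither $c$- nor $u$-special'' is $C^1$ open.

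For density I localize the construction at a periodic point with several preimages. Since $M$ admits a non-invertible partially hyperbolic endomorphism and we work with degree $d\ge n\ge 3$, a generic point has $d\ge 2$ distinct preimages. First I would choose a periodic point $p$ of $f$ together with two distinct preimages $q_1\neq q_2$ of $p$, and let $\bar{p}^{(1)},\bar{p}^{(2)}\in M^f$ be orbits through $p$ whose first backward coordinate is $q_1$, respectively $q_2$. By invariance of the unstable bundle one has $E^u_f(\bar{p}^{(i)})=Df(q_i)\,E^u_f\!\left(\bar{f}^{-1}\bar{p}^{(i)}\right)$, where $\bar{f}^{-1}\bar{p}^{(i)}$ is the orbit based at $q_i$ and, crucially, the unstable direction there is determined by the \emph{backward} orbit from $q_i$ and is therefore insensitive to the value of the forward derivative $Df(q_1)$. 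This is the key point that makes the perturbation effective.

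I would then perturb $f$ inside a small neighborhood $U$ of $q_1$, chosen disjoint from $q_2$, from $p$, and from the remaining coordinates of both backward orbits, so as to modify the linear map $Df(q_1)$ by a small rotation without altering $E^u_f\!\left(\bar{f}^{-1}\bar{p}^{(1)}\right)$. The image $Df(q_1)E^u_f\!\left(\bar{f}^{-1}\bar{p}^{(1)}\right)$ then moves, while $E^u_f(\bar{p}^{(2)})$ stays fixed, and a suitable arbitrarily small such perturbation renders the two unstable directions at $p$ distinct; the resulting $g$ is $C^1$-close to $f$, still partially hyperbolic, and not $u$-special. Repeating the argument with the center bundle at a (possibly different) periodic point and its preimages, and superposing the two perturbations on disjoint neighborhoods, yields $g$ that is simultaneously not $c$- and not $u$-special.

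The hardest step is the localization and global control in the density argument: although $Df(q_1)$ may be altered without changing the backward-defined direction at $q_1$, the perturbation does modify $f$ and hence in principle every invariant bundle along orbits meeting $U$, so one must verify that the directions entering the two comparisons vary by an amount controlled by the size of the perturbation and that the inequality produced at $p$ is genuine and robust. I would handle this through a cone-field estimate: partial hyperbolicity furnishes invariant cone fields that persist under small $C^1$ perturbations, and the unstable (resp. center) direction along any orbit is the unique attractor of the forward (resp. backward) cone dynamics, depending Lipschitz-continuously on the map; this pins down the directions at $q_2$ and along the unperturbed parts of the orbits, so that the induced change at $p$ is dominated by the explicit rotation inserted into $Df(q_1)$. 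The hypothesis $d\ge n$ enters here to guarantee enough distinct preimages, and hence enough independent backward branches, to place two directions in distinct positions of the relevant Grassmannian for both $\sigma=u$ and $\sigma=c$.
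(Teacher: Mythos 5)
Your openness argument and the basic perturbative mechanism (alter the derivative at one preimage $q_1$ of a point while the direction at $q_1$ determined by its backward branch stays put) coincide with the paper's, but the density half has concrete gaps. First, the quantifiers are in the wrong order: you propose to choose the support $U$ of the perturbation disjoint from ``the remaining coordinates of both backward orbits,'' but a backward orbit has infinitely many coordinates and may accumulate at $q_1$, so no such $U$ need exist. The paper fixes the balls \emph{first} (of radius smaller than a third of the fibre-separation constant $\tau$ of the covering map) and only then builds backward branches avoiding them; this is possible because each such ball contains at most one preimage of any point. Your fallback of a Lipschitz/cone estimate does not obviously repair this, since a return of the backward branch to $U$ at time $-1$ perturbs the direction at $q_1$ at first order and could cancel the inserted rotation. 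Second, you anchor the construction at a periodic point $p$ so that the forward-determined data at $p$ is untouched; but a partially hyperbolic endomorphism need not have any periodic point (e.g.\ an Anosov endomorphism times an irrational rotation), so the density claim is not established in general. The paper needs no periodic orbits because it never uses forward-determined bundles: it works exclusively with center-unstable directions, which are determined by backward branches alone.

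Third, the center case is not ``the same argument.'' A center direction along an orbit is $E^{cu}\cap E^{cs}$, with $E^{cu}$ determined by the backward branch and $E^{cs}$ unique and determined by the forward orbit; it is not the attractor of a backward cone dynamics (that would be the stable or center-stable direction). Making two center directions at $p$ differ therefore requires showing that the rotated plane $Df_{q_1}R\,E^{cu}(q_1)$ meets the fixed $E^{cs}(p)$ in a new subspace, a step you do not carry out. This is also where $d\ge n$ actually enters the paper: instead of treating $c$ and $u$ separately, it perturbs at $n-1$ of $n$ preimages of a non-fixed point $x$ so as to produce $n$ center-unstable subspaces at $x$ whose common intersection is trivial; a single common center (or unstable) direction would have to lie in all of them, so the perturbed map is simultaneously non-$c$- and non-$u$-special. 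Your two-preimage scheme, if repaired along these lines, would be a genuinely different (and potentially stronger, needing only $d\ge 2$) route, but as written it does not close.
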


An interesting point is related to the study of the concept of leaf conjugacy for endomorphisms.\begin{definition} Given $f, g : M \rightarrow M$ partially hyperbolic endomorphisms. We say that $f$ and $g$ are center leaf conjugated if for both are defined center foliations $\mathcal{F}^c_f$ and $\mathcal{F}^c_g$ respectively and there is a homeomorphism $h: M \rightarrow M$ such that
$$h(\mathcal{F}_f^c(x)) = \mathcal{F}_g^c(h(x))\; \mbox{and}\; h(f(\mathcal{F}^c_f(x)) = g(h(\mathcal{F}^c_f(x))),$$
that is, $h$ sends center leaves of $f$ to center leaves of $g$ and preserves the dynamics of the center leaves space.
\end{definition}

From the above definition $f$ and $g$ are $c-$special.

Consider $A: \mathbb{R}^3 \rightarrow \mathbb{R}^3 $ a linear map whose the matrix has integer entries. In this way, $A$ induces a map in $\mathbb{T}^3,$ that we denote also by $A:\mathbb{T}^3 \rightarrow \mathbb{T}^3 .$ Suppose that $A$ has three \mbox{eigenvalues} $\lambda_i,$ $ i =1,2,3$ such that $0 < |\lambda_1| < 1 < |\lambda_2| < |\lambda_3|,$ we can consider the corresponding eigenspaces $E_{\lambda_i}, i=1,2,3.$ For $A:\mathbb{T}^3 \rightarrow \mathbb{T}^3 $ we can consider $E^s_A = E_{\lambda_1}, E^{wu}_A = E_{\lambda_2}$ and $E^{uu}_A = E_{\lambda_3.}$ We can interpret $A$ as a special partially hyperbolic of $\mathbb{T}^3,$ with partially hyperbolic decomposition $E^s_A \oplus E^c_A \oplus E^u_A,$ where $ E^s_A = E_{\lambda_1},$ \mbox{$ E^c_A = E^{wu}_A = E_{\lambda_2},$} and $ E^u_A = E^{uu}_A = E_{\lambda_3}. $ The bundles $E^{wu}_A$ and $E^{uu}_A$ are named respectively weak and strong unstable bundles.

\begin{corollary}\label{Teo B}
 Let $A: \mathbb{T}^3 \rightarrow \mathbb{T}^3  $ be a linear Anosov endomorphism, with degree bigger than two, with partially hyperbolic decomposition
 $E^s_A \oplus E^{wu}_A \oplus E^{uu}_A.$ Given any $\varepsilon > 0,$ there is $f: \mathbb{T}^3 \rightarrow \mathbb{T}^3,$ a partially hyperbolic endomorphism $\varepsilon-C^1-$close to $A,$ for which there is a residual set $R \subset \mathbb{T}^3,$ such that, for any $x \in R$ is defined infinitely many $wu$ and $uu$ bundles.  Additionally, for each $f$ as before,  given $x \in \mathbb{T}^3$ and a bundle $E^{wu}_f(x),$ there is a submanifold $\mathcal{F}^{wu}_f(x)$ tangent to a direction $E^{wu}_f$ in each point, and at the point $x,$ $\mathcal{F}^{wu}_f(x)$ is tangent to the chosen $E^{wu}_f(x).$  Particularly, $f$ is not center leaf conjugated to $A.$
\end{corollary}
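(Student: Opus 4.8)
The plan is to derive the statement as a consequence of Theorems~\ref{Teo C} and~\ref{Teo D}, once we locate inside the prescribed $C^1$-neighbourhood of $A$ a perturbation that is transitive, partially hyperbolic, and non-special. First I would observe that $A$ itself is a transitive (indeed mixing) hyperbolic toral endomorphism which, under the refinement $E^s_A\oplus E^{wu}_A\oplus E^{uu}_A$ with each bundle one-dimensional, is a non-invertible partially hyperbolic endomorphism of $\mathbb{T}^3$; its degree $|\det A|=|\lambda_1\lambda_2\lambda_3|$ exceeds two, hence $d\geq 3=n$. Thus $\mathbb{T}^3$ satisfies the hypothesis of Theorem~\ref{Teo D}, and $A$ lies in the space of partially hyperbolic endomorphisms of degree $d\geq n$ to which that theorem applies. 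Since the set $\mathcal{U}$ produced by Theorem~\ref{Teo D} is $C^1$-open and dense, for any $\varepsilon>0$ I can pick $f\in\mathcal{U}$ with $d_{C^1}(f,A)<\varepsilon$; by the cone criterion partial hyperbolicity is $C^1$-open, so for $\varepsilon$ small enough $f$ is again an Anosov (hence partially hyperbolic) endomorphism with a splitting $E^s_f\oplus E^{wu}_f\oplus E^{uu}_f$ of the same bundle dimensions, and $f$ is neither $c$- nor $u$-special.

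To invoke Theorem~\ref{Teo C} I must ensure $f$ is transitive. I would obtain this through the inverse limit: by Przytycki's inverse-limit structural stability of Anosov endomorphisms~\cite{PRZ}, the natural extension $\bar f:M^f\to M^f$ is topologically conjugate to $\bar A$, which is transitive because $A$ is mixing; since the projection $\pi(\bar x)=x_0$ is a continuous surjection with $f\circ\pi=\pi\circ\bar f$, transitivity of $\bar f$ pushes forward to transitivity of $f$. Now apply Theorem~\ref{Teo C} twice. For $\sigma=c$ (the bundle $E^{wu}_f$), since $f$ is not $c$-special the dichotomy forces a residual set $R_c\subset\mathbb{T}^3$ on which every point has infinitely many center, i.e.\ $wu$, directions; for $\sigma=u$ (the bundle $E^{uu}_f$), non-$u$-speciality forces a residual $R_u$ with infinitely many $uu$ directions at each of its points. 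Setting $R=R_c\cap R_u$, which is residual by Baire, every $x\in R$ carries infinitely many $wu$ and $uu$ bundles.

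For the integrability claim I would argue inside unstable manifolds. The full unstable bundle $E^u_f=E^{wu}_f\oplus E^{uu}_f$ is integrable along orbits to unstable discs $W^u_f(\bar x)$, on which $f$ acts as a local expanding diffeomorphism carrying the dominated splitting $E^{wu}_f\oplus E^{uu}_f$; since $E^{wu}_f$ is one-dimensional it is integrable, and as the weak-expanding subbundle of the diffeomorphism $f|_{W^u_f(\bar x)}$ it integrates to a weak-unstable sub-foliation. Given $x$ and a choice of $E^{wu}_f(x)$ determined by an orbit $\bar x$ with $x_0=x$, the leaf through $x$ of this sub-foliation is the required submanifold $\mathcal{F}^{wu}_f(x)$: it is tangent to a $wu$-direction at each of its points and to the chosen $E^{wu}_f(x)$ at $x$. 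Equivalently, one builds these manifolds for $\bar f$ on $M^f$ by the weak-unstable manifold theorem and projects them by $\pi$.

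Finally, $f$ cannot be center leaf conjugate to $A$: by the remark following the definition of center leaf conjugacy, such a conjugacy presupposes well-defined center foliations $\mathcal{F}^c_f$ and $\mathcal{F}^c_A$, and in particular that $f$ is $c$-special. But $f$ has infinitely many center directions at every point of the residual set $R$, so it is not $c$-special and admits no center foliation; hence no center leaf conjugacy to $A$ can exist. The main obstacle I anticipate is the transitivity step---guaranteeing that the non-special perturbation furnished by Theorem~\ref{Teo D} can simultaneously be taken transitive---together with the careful construction of the weak-unstable submanifolds via the sub-foliation of the unstable manifolds (or, equivalently, through the natural extension).
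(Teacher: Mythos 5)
Your proof is correct and its skeleton coincides with the paper's: perturb via Theorem~\ref{Teo D} to get a nearby non-special $f$, check transitivity, apply Theorem~\ref{Teo C} for $\sigma=c$ and $\sigma=u$ and intersect the two residual sets, then produce tangent submanifolds and conclude non-conjugacy from non-$c$-speciality. The two places where you diverge are worth noting. For transitivity the paper simply cites \cite{AH}; your route through Przytycki's inverse-limit structural stability of Anosov endomorphisms \cite{PRZ} (conjugating $\bar f$ to $\bar A$ and pushing transitivity down through the factor map $p:M^f\to M$) is a legitimate and more self-contained justification of the same fact. For the submanifolds $\mathcal{F}^{wu}_f(x)$ the paper does not sub-foliate the unstable discs: it takes the Anosov unstable manifold $W^{cu}_f(x)$ attached to the chosen orbit $\bar x$ (from \cite{PRZ}), uses that $E^{cs}_f=E^s_f\oplus E^{wu}_f$ is \emph{uniquely} defined (Lemma~\ref{obs 2}) and integrates to a genuine foliation $W^{cs}_f$ via \cite{B}, and defines $\mathcal{F}^{wu}_f(x)=W^{cu}_f(x)\cap W^{cs}_f(x)$, whose tangent space at each $z$ is $E^{cu}_f(z)\cap E^{cs}_f(z)=E^{wu}_f(z)$. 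This intersection argument buys existence for free from two objects already known to exist. Your alternative --- integrating the weak bundle inside $W^u_f(\bar x)$ --- also works, but the phrase ``it integrates to a weak-unstable sub-foliation'' is the one step you should not take for granted: domination of $f|_{W^u_f(\bar x)}$ does not by itself yield a sub-foliation, and weak bundles are notoriously not uniquely integrable. What saves you is that the statement only asks for \emph{some} integral submanifold tangent to a $wu$-direction at each point, and the restriction of $E^{wu}_f$ to $W^u_f(\bar x)$ is a continuous one-dimensional distribution, so Peano existence of integral curves suffices; if you phrase it that way (or switch to the paper's intersection with $W^{cs}_f$), the argument is complete.
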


We remark that a  partially hyperbolic endomorphism $f$ being non $c-$special is an obstruction to leaf conjugacy with its linearization $A.$  From Theorem \ref{Teo D}, the property to be non $c-$special is a $C^1-$generic characteristic for partially hyperbolic endomorphisms with degree bigger than two in dimension three. This property, in particular, implies that leaf conjugacy is not ensured, even $C^1-$close linear Anosov automorphism as above. Corollary \ref{Teo B} is quite different of Theorem B in \cite{HH21}, that establishes that if a   partially hyperbolic $f: \mathbb{T}^2 \rightarrow \mathbb{T}^2 $ endomorphisms has hyperbolic linearization $A,$  then $f$ and $A$ are center leaf conjugated.

\begin{corollary}\label{Teo E}
 The $C^1$interior of the set of all special partially hyperbolic endomorphisms is exactly the set of all partially hyperbolic diffeomorphisms.
\end{corollary}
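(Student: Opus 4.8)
The plan is to prove the equality by two inclusions, writing $\mathcal{S}$ for the set of all special partially hyperbolic endomorphisms and working inside the space of all partially hyperbolic endomorphisms (the $C^1$ local diffeomorphisms carrying a splitting $E^s_f\oplus E^c_f\oplus E^u_f$) with the $C^1$ topology. One inclusion is soft; the other rests on a single local perturbation, which I expect to be the only genuine difficulty.

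For the inclusion ``partially hyperbolic diffeomorphisms $\subseteq$ $C^1$-interior of $\mathcal{S}$'', I would first use that partial hyperbolicity is a $C^1$-open condition and that the topological degree is locally constant in the $C^0$, hence $C^1$, topology. A diffeomorphism has degree $\pm 1$, so every map $C^1$-close to a partially hyperbolic diffeomorphism $f$ is again a partially hyperbolic local diffeomorphism of degree $\pm 1$; on a closed connected manifold a local diffeomorphism is a covering, so degree $\pm 1$ forces it to be a genuine diffeomorphism. Finally, if $g$ is a partially hyperbolic diffeomorphism then each $x\in M$ lies on a unique full orbit $\bar{x}=(x_n)_{n\in\Z}$ (invertibility determines the backward orbit), so $E^u_g(x)$ and $E^c_g(x)$ are automatically orbit-independent and $g\in\mathcal{S}$. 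Hence $f$ has an entire $C^1$-neighbourhood contained in $\mathcal{S}$ and is an interior point.

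For the reverse inclusion I would argue the contrapositive: every non-invertible $f$ (degree $|d|\ge 2$) fails to be interior to $\mathcal{S}$ because it is a $C^1$-limit of non-special maps. As $f$ is a covering of degree $|d|\ge 2$, any $p\in M$ has two distinct preimages $q_1\neq q_2$; fix disjoint small balls around them and backward orbits $\bar{x}^{(1)},\bar{x}^{(2)}$ through $q_1$ and $q_2$. The unstable direction at $p$ read along $\bar{x}^{(i)}$ equals $Df_{q_i}(E^u_f(q_i))$; if these already differ then $f\notin\mathcal{S}$ and we are done, so assume $f$ is special and both equal $E^u_f(p)$. I would then perturb to $g=f\circ\psi$ with $\psi$ a $C^1$-small diffeomorphism supported in the ball about $q_1$, fixing $q_1$, whose derivative $T=D\psi_{q_1}$ is a small tilt with $T(E^u_f(q_1))\neq E^u_f(q_1)$ (there is room in $T_{q_1}M$ to move the proper nonzero subspace $E^u_f$ off itself). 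Since $g=f$ near $q_2$ and along $\bar{x}^{(2)}$, the $q_2$-direction at $p$ is unchanged, while along $\bar{x}^{(1)}$ it becomes $Df_{q_1}(T\,E^u_g(q_1))$; once $E^u_g(q_1)=E^u_f(q_1)$ this differs from $E^u_f(p)$, so $g$ carries two distinct unstable directions at $p$, is not $u$-special, and hence not special. As $\|g-f\|_{C^1}$ is arbitrarily small, $f$ is not interior. (If $E^u$ were trivial one runs the same argument with $E^c$.) This is a one-point localization of the mechanism behind Theorem \ref{Teo D}, which is why the corollary requires neither degree $\ge n$ nor transitivity.

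The step I expect to be the main obstacle is guaranteeing that $T$ genuinely moves the invariant unstable direction at $p$ and is not cancelled by the perturbation acting elsewhere along $\bar{x}^{(1)}$ --- precisely the case where the backward orbit of $q_1$ re-enters the support of $\psi$. I would control this by invoking continuity of the unstable bundle over the inverse limit $M^g$ in the $C^1$ topology, so that $E^u_g(q_1)$ stays within $o(1)$ of $E^u_f(q_1)$, together with a dominant-term estimate making the fixed tilt angle of $T$ exceed those corrections; alternatively, one selects from the outset a backward branch through $q_1$ (for instance a periodic pre-orbit) whose orbit avoids the chosen ball, which removes the recurrence issue entirely and makes $E^u_g(q_1)=E^u_f(q_1)$ hold exactly.
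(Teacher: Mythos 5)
Your proof is correct, and its core mechanism --- a $C^1$-small perturbation supported near one preimage of a point $p$, composed with $f$, that tilts the invariant bundle carried along that branch while a second branch chosen to avoid the support is left untouched --- is exactly the mechanism of the paper's Theorem~\ref{Teo D}, to which the paper's two-line proof of this corollary simply defers (``by a similar approach of Theorem~\ref{Teo D} we can find $f$ \ldots not $cu$-special''). The differences are worth noting, and they are all in your favor. First, you supply the forward inclusion (partially hyperbolic diffeomorphisms lie in the interior of the special maps) via local constancy of degree plus the fact that a degree-$\pm 1$ covering of a closed manifold is a diffeomorphism; the paper omits this direction entirely, though it is needed for the stated equality. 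Second, Theorem~\ref{Teo D} as stated requires degree $d\geq n\geq 3$ and three preimages with a triple-intersection trick (needed there to defeat \emph{both} $c$- and $u$-speciality simultaneously), whereas the corollary must cover every non-invertible endomorphism, i.e.\ degree $|d|\geq 2$; your version uses only two preimages and directly produces two distinct unstable directions at $p$, which suffices since failing $u$-speciality already kills speciality. This makes your argument genuinely cover the degree-$2$ case that the paper's citation of Theorem~\ref{Teo D} technically does not. Third, your resolution of the recurrence issue --- selecting the backward branch of $q_1$ (and, you should say explicitly, of $q_2$ as well) outside the perturbation ball, which is possible because distinct preimages are $\tau$-separated and the ball has radius less than $\tau/2$ --- is precisely the device used in the proof of Theorem~\ref{Teo D} (inverse branches ``out of $B_1\cup B_2$''), and it makes $E^u_g(q_1)=E^u_f(q_1)$ hold exactly, so the ``dominant-term estimate'' alternative you mention is not needed.
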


In our theorems we are considering $\dim(E^{\sigma}_f) \geq 1,$ for any $\sigma \in \{s,c,u\}.$



\section{Basic Preliminaries}

We start this section with the precise definition of partially hyperbolic endomorphism.

\begin{definition}\label{def 1}
Let $f:M\rightarrow M$ be a $C^1$ local diffeomorphism. We say that $f$ is a \emph{partially hyperbolic endomorphism} if there is a Riemannian metric
$\langle\cdot,\cdot\rangle$ and
constants \mbox{$0<\nu<\gamma_1\leq\gamma_2<\mu$} with $\nu<1,$ $\mu>1$ and $C>1$ such
that for each orbit $(x_n)_{n\in{\Z}}$ of $f,$ this is $f(x_n)=x_{n+1},$ there is a decomposition
$$T_{x_n}M=E^s_f({x_n})\oplus E^c_f(x_n)\oplus E^u_f(x_n)$$ satisfying:

\begin{enumerate}

\item $Df_{x_i}(E^{\ast}_f(x_i))=E^{\ast}_f(x_{i+1}), \ast \in \{s,c,u\},$  for any $i \in \mathbb{Z},$
\item $||Df^n_{x_i}(v^s)||\leq C\nu^n||v^s||,$
 \item  $C^{-1}\gamma^n_1||v^c||\leq||Df^n_{x_i}(v^c)||\leq C\gamma^n_2||v^c||,$
 \item $C^{-1}\mu^n||v^u||\leq||Df^n_{x_i}(v^u)||,$

\end{enumerate}
for any $n \geq 0,$  $i \in \mathbb{Z},$  $ v^s\in E_{x_i}^s,  v^c\in E_{x_i}^c$ and  $v^u\in E_{x_i}^u.$
\end{definition}

This definition is also called absolute partially hyperbolic endomorphism, in contrast with another weaker definition where the constants \mbox{$0<\nu<\gamma_1\leq\gamma_2<\mu$} depend on $x.$ Here we will only consider absolute partially hyperbolic endomorphism.

\begin{example}
The map $A: \mathbb{T}^2 \rightarrow \mathbb{T}^2  $ induced by the matrix
$A=
\left(
  \begin{array}{cc}
    n & 1 \\
    1 & 1 \\
  \end{array}
\right)$
com $n\geq 2,$ is an Anosov endomorphism, in this way,
$f: \mathbb{T}^3 \rightarrow \mathbb{T}^3, $ induced by  $ L=
\left(
  \begin{array}{cc}
    A & 0 \\
     0  & 1 \\
  \end{array}
\right)$
is an example of partially hyperbolic endomorphism.
\end{example}

Now we will show some classical properties, whose proofs are very similar to those of diffeomorphisms, but we write for the completeness of the text.

\begin{proposition}[Uniqueness]\label{uniqueness}
Let $f:M \rightarrow M$ be a partially hyperbolic endomorphism of $M.$ The orbits  $\bar{x} = (x_i)_{i \in \mathbb{Z}}\in M^f$ define uniquely the partially hyperbolic decomposition $E^s_f(x_i) \oplus E^c_f(x_i) \oplus  E^u_f(x_i), i \in \mathbb{Z}.$
\end{proposition}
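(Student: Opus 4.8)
The plan is to show that each of the three bundles at $x_0$ is intrinsically determined by the asymptotic growth behaviour of the derivative cocycle along the fixed orbit $\bar x = (x_i)_{i\in\Z}$, and hence cannot depend on any choices. The crucial structural remark is that, although $f$ is only an endomorphism, it is a local diffeomorphism, so each $Df_{x_i}\colon T_{x_i}M\to T_{x_{i+1}}M$ is a linear isomorphism; therefore along the single orbit $\bar x$ the iterates $Df^n_{x_0}$ are defined for all $n\in\Z$ (backward iteration meaning composition of the inverses $Df^{-1}_{x_{-1}},\dots$ along $\bar x$). This is what lets both forward and backward rates enter. I would define, for $0\neq v\in T_{x_0}M$,
$$
\chi^+(v)=\limsup_{n\to+\infty}\frac1n\log\|Df^{\,n}_{x_0}(v)\|,\qquad
\chi^-(v)=\limsup_{n\to+\infty}\frac1n\log\|Df^{-n}_{x_0}(v)\|,
$$
and observe that these quantities depend only on the cocycle $(Df_{x_i})_{i\in\Z}$, i.e. only on $\bar x$.

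Next I would read off the rates on the pure summands directly from Definition \ref{def 1}, using conditions (2)--(4) together with the invariance (1). Conditions (2)--(4) give forward estimates $\chi^+|_{E^s}\le\log\nu$, $\log\gamma_1\le\chi^+|_{E^c}\le\log\gamma_2$ and $\chi^+|_{E^u}\ge\log\mu$. To obtain backward rates I would use invariance: since $Df^{-n}_{x_0}(v)\in E^{\ast}_f(x_{-n})$ for $v\in E^{\ast}_f(x_0)$, applying the forward bound at the base point $x_{-n}$ and inverting yields $\chi^-|_{E^s}\ge-\log\nu$, $-\log\gamma_2\le\chi^-|_{E^c}\le-\log\gamma_1$ and $\chi^-|_{E^u}\le-\log\mu$. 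Then for an arbitrary vector $v=v^s+v^c+v^u$ the strict gaps $\nu<\gamma_1\le\gamma_2<\mu$ force the extremal component to dominate: if $v^u\neq0$ the $\mu^n$ term overwhelms the others forward, so $\chi^+(v)\ge\log\mu$; if $v^u=0$ but $v^c\neq0$ then $\chi^+(v)\ge\log\gamma_1>\log\nu$. This gives the purely dynamical description
$$
E^s_f(x_0)=\{\,v:\chi^+(v)\le\log\nu\,\},
$$
and, symmetrically, using the backward rates, $E^u_f(x_0)=\{\,v:\chi^-(v)\le-\log\mu\,\}$. I would finish by pinning down the center bundle as an intersection: the same domination shows $\{\chi^+\le\log\gamma_2\}=E^s_f\oplus E^c_f$ (the condition exactly excludes any unstable component) and $\{\chi^-\le-\log\gamma_1\}=E^c_f\oplus E^u_f$, whence
$$
E^c_f(x_0)=\{\,v:\chi^+(v)\le\log\gamma_2\,\}\cap\{\,v:\chi^-(v)\le-\log\gamma_1\,\}=(E^s_f\oplus E^c_f)\cap(E^c_f\oplus E^u_f).
$$
Since all three right-hand sides are expressed solely through $\chi^\pm$, and $\chi^\pm$ are functions of the orbit $\bar x$ alone, the splitting is unique.

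The main obstacle I expect is not the forward contraction/expansion, which is immediate from the definition, but two points that require care. First, the definition only states forward estimates (for $n\ge 0$ and all $i$), so the backward rates on $E^s_f$ and $E^c_f$ must be manufactured by exploiting invariance along the orbit and invertibility of $Df$ there; this is exactly where the endomorphism subtlety could bite, and I would make sure the backward cocycle is genuinely well defined along $\bar x$. Second, the domination step — replacing $\chi^\pm$ of a sum by the rate of its extremal component — relies on the strict inequalities $\nu<\gamma_1$ and $\gamma_2<\mu$; I would verify that the crossed lower/upper bounds (e.g. $\|Df^n v\|\ge C^{-1}\mu^n\|v^u\|-C\gamma_2^n\|v^c\|-C\nu^n\|v^s\|$) indeed isolate the dominant term for large $n$, taking some mild care that these are $\limsup$/$\liminf$ statements rather than genuine limits. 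Everything else is routine bookkeeping with the constants $C,\nu,\gamma_1,\gamma_2,\mu$.
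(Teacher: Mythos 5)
Your proof is correct and rests on exactly the same mechanism as the paper's: the forward rates separate off any unstable component and the backward rates (obtained, as you note, by applying the forward estimates at the base point $x_{-n}$ and inverting along the orbit) separate off any stable component, so each summand is determined by the orbit alone. The paper packages this as a short contradiction argument (a vector in a second center direction would have a nonzero $v^u$ or $v^s$ component whose growth is incompatible with the center bounds), whereas you phrase it as a positive characterization of the bundles via $\chi^{\pm}$; the content is the same.
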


\begin{proof}
We provide the proof for the center direction, the other cases can be treated similarly. Suppose by contradiction, that for a given partially hyperbolic endomorphism $f:M\rightarrow M,$ there is an orbit $\bar{x}, x_0 = x,$ for which there are  $E^c_1(x)$ and $E^c_2(x)$  two center directions with $E^c_1(x) \neq E^c_2(x).$ We can suppose that there is $v\in E^c_2(x)\setminus E^c_1(x),$ then for the decomposition $E^u_f(x)\oplus E^c_1(x)\oplus E^s_f(x),$ we can write $v=v^s+v^c+v^u,$ with $v^u\neq 0$ or $v^s\neq 0.$

If $v^u\neq 0,$ $v$ grows as $v^u$  by positive iterates of $Df,$ contradicting the fact that $v\in E^c_2(x).$ Analogously, if $v^s\neq 0,$  then taking inverses on $\bar{x}$ it implies that $Df^{-n} v$ grows as $Df^{-n}v^s,$ again a contradiction.
\end{proof}

\begin{proposition}[Continuity]\label{prop 07}
Let $f:M \rightarrow M$ be a partially hyperbolic endomorphism, the spaces $E^u_f(x_0),E^c_f(x_0)$ and $E^s_f(x_0)$ vary continuously in $M^f.$ This is, given any sequence $\bar{x}_k\in M^f$ converging to $\bar{x} = (x_i)_{i \in \mathbb{Z}},$ then $E^{\sigma}_f(x_{0,k})$ converges to $E^{\sigma}_f(x_0),$ with $\sigma\in \{u,c,s\},$ where $x_{0,k}$ is the term in zero coordinate of $\bar{x}_k.$
\end{proposition}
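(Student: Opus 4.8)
The plan is to combine two compactness facts with the uniqueness already established in Proposition~\ref{uniqueness}. First, since $M$ is compact and $f$ is continuous, $(M^f,\bar d)$ is compact, and convergence $\bar x_k\to\bar x$ in $M^f$ forces coordinate-wise convergence: from $\bar d(\bar x_k,\bar x)=\sum_{i}d(x_{i,k},x_i)2^{-|i|}\to 0$ one reads off $x_{i,k}\to x_i$ in $M$ for every fixed $i\in\Z$. Second, writing $d_\sigma:=\dim E^\sigma_f$, each subspace $E^\sigma_f(x_{0,k})$ lives in the Grassmann bundle of $d_\sigma$-planes over $M$, which is compact; hence it suffices to show that every subsequential limit of $\big(E^\sigma_f(x_{0,k})\big)_k$ equals $E^\sigma_f(x_0)$. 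Passing to a subsequence and relabelling, I may therefore assume $E^\sigma_f(x_{0,k})\to F^\sigma$ for all three $\sigma\in\{s,c,u\}$ at once, with $\dim F^\sigma=d_\sigma$, and the goal reduces to proving $F^\sigma=E^\sigma_f(x_0)$.

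Next I would pass to the limit in the defining estimates of Definition~\ref{def 1}. Fix $\sigma$ and a unit vector $v\in F^\sigma$, and choose unit vectors $v_k\in E^\sigma_f(x_{0,k})$ with $v_k\to v$. For each fixed $n\ge 0$ the iterate $Df^n_{x_{0,k}}=Df_{x_{n-1,k}}\circ\cdots\circ Df_{x_{0,k}}$ involves only finitely many coordinates, all converging; since $f$ is $C^1$, $Df^n_{x_{0,k}}\to Df^n_{x_0}$ in operator norm, and likewise the backward iterates converge because each $Df_x$ is invertible with continuously varying inverse. As the inequalities (2), (3), (4) are closed conditions, they are inherited in the limit: $F^s$ satisfies $\|Df^n_{x_0}v\|\le C\nu^n\|v\|$, $F^u$ satisfies the corresponding backward-contraction estimate, and $F^c$ satisfies both bounds $C^{-1}\gamma_1^n\|v\|\le\|Df^n_{x_0}v\|\le C\gamma_2^n\|v\|$, all along the orbit $\bar x$ and for every $n\ge 0$.

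Finally I would identify each $F^\sigma$ with the genuine subspace by comparing with the true splitting $T_{x_0}M=E^s_f(x_0)\oplus E^c_f(x_0)\oplus E^u_f(x_0)$ and using the spectral gaps $\nu<\gamma_1\le\gamma_2<\mu$. Decomposing $v\in F^s$ as $v=v^s+v^c+v^u$ in the true splitting, the forward estimate on $F^s$ combined with the lower bounds $\|Df^nv^u\|\ge C^{-1}\mu^n\|v^u\|$ and $\|Df^nv^c\|\ge C^{-1}\gamma_1^n\|v^c\|$ forces $v^u=0$ (since $\mu>\nu$) and $v^c=0$ (since $\gamma_1>\nu$), so $F^s\subseteq E^s_f(x_0)$, whence equality by the equality of dimensions; the case of $F^u$ is symmetric via backward iteration. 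Once $F^s=E^s_f(x_0)$ and $F^u=E^u_f(x_0)$ are known, $F^c=E^c_f(x_0)$ follows either from the two-sided estimate or, after checking $F^s\oplus F^c\oplus F^u=T_{x_0}M$, directly from Proposition~\ref{uniqueness}. I expect the center bundle to be the only delicate point: because the forward center rates $\gamma_1,\gamma_2$ may straddle $1$, one cannot pin down $F^c$ by forward behaviour alone, and must combine the forward bounds with the backward bounds derived from (3), using $\gamma_2<\mu$ to discard the unstable component and $\nu<\gamma_1$ (read along the backward orbit) to discard the stable one; the rest is a routine limiting argument.
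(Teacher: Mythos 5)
Your proposal follows essentially the same route as the paper: reduce to subsequential limits via compactness (the paper uses convergent orthonormal bases where you use the Grassmannian), pass to the limit in the defining inequalities of Definition~\ref{def 1} using continuity of $Df$ and coordinate-wise convergence in $M^f$, and then identify the limit spaces with $E^\sigma_f(x_0)$ by uniqueness of the splitting. The only difference is that you re-derive the uniqueness step from the spectral gaps in detail, whereas the paper simply cites Proposition~\ref{uniqueness}; the content is the same.
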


\begin{proof} Unless to take a subsequence we can consider the constants $u,c$ and $s$ the respectively dimensions of $E^u_f,E^c$ and $E^s_f,$ for each $x_{0,k}$ of $\bar{x}_k.$ For each $k$ consider $\{e^k_1,e^k_2,\ldots,e^k_u\},$
$\{v^k_1,v^k_2,\ldots,v^k_c\}$ and $\{w^k_1,w^k_2,\ldots,w^k_s\}$ the respective orthonormal basis of the subspaces $E^u_f(x_{0,k}),$
$E^c_f(x_{0,k})$ and $E^s_f(x_{0,k}).$ By compactness we can admit that these basis converge, respectively to $\{e_1,e_2,\ldots,e_u\},$
$\{v_1,v_2,\ldots,v_c\}$ and $\{w_1,w_2,\ldots,w_s\},$  which are orthonormal basis of the subspaces $F^u(x_0),$
$F^c(x_0)$ and $F^s(x_0).$

By continuity of $Df$ the spaces $F^u(x_0),$
$F^c(x_0)$ and $F^s(x_0)$ satisfies, respectively, the conditions (2), (3) and (4)
of the Definition \ref{def 1}. By uniqueness of the partially hyperbolic decomposition in $M^f$ we get $F^{\sigma}(x_0)=E^{\sigma}_f(x_0),\sigma \in \{s,c,u\}.$ In this way, each  convergent subsequence  of $E^{\sigma}_f(x_{0,k})$ must to converge $F^{\sigma}(x_0).$

\end{proof}

The previous proof was done for the zero coordinate, but the same can be done for any coordinate of $\bar{x}.$

\begin{proposition}[Transversality]
For each $\bar{x} \in M^f,$ the angles between any two direction $E^s_f(x_0),$ $E^c_f(x_0)$ and $E^u_f(x_0)$ are uniformly far from zero. Other cases are treated similarly.
\end{proposition}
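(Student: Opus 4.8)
The plan is to combine the continuity of the invariant splitting from Proposition \ref{prop 07} with the compactness of the inverse limit space $M^f.$ Recall that for two nonzero subspaces $V,W\subset T_{x_0}M$ one can define the angle $\angle(V,W)\in[0,\pi/2]$ through
$$\cos\angle(V,W)=\sup\left\{\frac{|\langle v,w\rangle|}{\|v\|\,\|w\|}\ :\ v\in V\setminus\{0\},\ w\in W\setminus\{0\}\right\}.$$
For finite-dimensional $V$ and $W$ this supremum is attained on the compact unit spheres, and it equals $1$ exactly when $V\cap W\neq\{0\};$ hence $\angle(V,W)>0$ if and only if $V\cap W=\{0\}.$

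First I would fix a pair $\sigma\neq\sigma'$ in $\{s,c,u\}$ and consider the function $\theta:M^f\to[0,\pi/2]$ defined by $\theta(\bar{x})=\angle\big(E^{\sigma}_f(x_0),E^{\sigma'}_f(x_0)\big).$ Pointwise positivity is immediate: since $T_{x_0}M=E^s_f(x_0)\oplus E^c_f(x_0)\oplus E^u_f(x_0)$ is a direct sum, any two of its summands meet only at the origin, so $\theta(\bar{x})>0$ for every $\bar{x}.$ Continuity of $\theta$ is where Proposition \ref{prop 07} enters: if $\bar{x}_k\to\bar{x}$ in $M^f,$ then $E^{\sigma}_f(x_{0,k})$ and $E^{\sigma'}_f(x_{0,k})$ converge to $E^{\sigma}_f(x_0)$ and $E^{\sigma'}_f(x_0);$ choosing orthonormal bases that converge and using that the supremum above depends continuously on the generating unit vectors, one obtains $\theta(\bar{x}_k)\to\theta(\bar{x}).$

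Finally, because $M$ is compact and $f$ is continuous, $(M^f,\bar{d})$ is compact, so the continuous function $\theta$ attains its infimum at some $\bar{x}^*\in M^f.$ By pointwise positivity $\theta(\bar{x}^*)>0,$ whence $\theta\geq\theta(\bar{x}^*)>0$ uniformly on $M^f.$ Taking the smallest of the three constants obtained for the three pairs gives a single uniform lower bound for all pairwise angles, and repeating the argument at an arbitrary coordinate $x_n$ (via the remark following Proposition \ref{prop 07}) covers the remaining cases. The step I expect to be the most delicate is the continuity of $\theta$: one must verify that convergence of the subspaces genuinely forces convergence of the minimal angle, which relies on the dimensions being locally constant (handled by passing to subsequences exactly as in the proof of Proposition \ref{prop 07}) and on the attainment of the supremum via compactness of the unit spheres. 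Once continuity is secured, the compactness argument is entirely routine.
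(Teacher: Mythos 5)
Your proposal is correct and follows essentially the same route as the paper: pointwise positivity of the angle from the direct sum, continuity of the angle function on $M^f$ via Proposition \ref{prop 07}, and compactness of $(M^f,\bar d)$ to extract a positive minimum. The extra care you take with the definition of the angle via the cosine supremum and with the continuity verification only makes explicit what the paper's proof leaves implicit.
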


\begin{proof} To fix the ideas, consider the stable and unstable bundles. Consider $\theta: M^f \rightarrow [0, \frac{\pi}{2}]$ the angle function $\theta(\bar{x}) = \angle ( E^s_f(x_0), E^u_f(x_0) ).$ Since $E^s_f(x_0)$ and $E^u_f(x_0)$ vary continuously with $\bar{x} \in M^f,$ the angle so is. Since $\theta(\bar{x}) > 0,$ for any $\bar{x} \in M^f,$ by compactness the minimum of $\theta$ is positive. Thus the angle between $E^s_f(x_0)$ and $E^u_f(x_0)$ is uniformly far from zero.

\end{proof}

\begin{lemma}\label{obs 2}
Let $f:M\rightarrow M$ be a partially hyperbolic endomorphism, fix $x\in M,$ $E^c_f(x),$ a center direction defined for $x.$ If $F_f^c(x)$  is another  center direction for $x,$ then
$F_f^c(x)\subset E^c_f(x)\oplus E^s_f(x).$
In other words $E^{cs}_f = E^c_f \oplus E^s_f$ is uniquely defined, for each $x \in M.$
\end{lemma}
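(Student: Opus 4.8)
The plan is to exploit the fact that both defining inequalities for the center--stable directions, namely items (3) and (4) of Definition \ref{def 1}, only constrain \emph{forward} iterates $Df^n_x$ with $n\geq 0$, and that the forward orbit $x,f(x),f^2(x),\dots$ is the same for every orbit $\bar{x}\in M^f$ with $x_0=x$. Thus $E^{cs}_f$ should be recoverable from the forward dynamics alone, which is orbit-independent, whereas the ambiguity in choosing a backward orbit can only move the center direction inside $E^c_f\oplus E^s_f$. The stable bundle $E^s_f(x)$ is already unique, as noted before the statement, so the argument reduces to locating and killing the unstable component of a rival center vector.

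First I would fix one orbit $\bar{x}$ with $x_0=x$ and the associated splitting $T_xM=E^s_f(x)\oplus E^c_f(x)\oplus E^u_f(x)$. Let $F^c_f(x)$ be a center direction coming from a second orbit $\bar{y}$ with $y_0=x$, take $v\in F^c_f(x)$, and write $v=v^s+v^c+v^u$ in the $\bar{x}$-splitting; the goal is $v^u=0$. Applying item (3) along $\bar{y}$ gives the forward upper bound $\|Df^n_x v\|\leq C\gamma_2^n\|v\|$ for all $n\geq 0$. On the other hand, since $Df^n_x$ preserves the $\bar{x}$-splitting by item (1), the three pieces $Df^n_x v^s$, $Df^n_x v^c$, $Df^n_x v^u$ lie in $E^s_f(x_n)$, $E^c_f(x_n)$, $E^u_f(x_n)$ respectively; by the Transversality Proposition the angles between these bundles are uniformly bounded away from zero along the whole orbit, so there is a constant $c_0>0$, independent of $n$, with $\|Df^n_x v\|\geq c_0\,\|Df^n_x v^u\|$. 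Combined with item (4) this yields $\|Df^n_x v\|\geq c_0C^{-1}\mu^n\|v^u\|$.

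Putting the two estimates together gives $c_0C^{-1}\mu^n\|v^u\|\leq C\gamma_2^n\|v\|$ for every $n\geq 0$; since $\gamma_2<\mu$, letting $n\to\infty$ forces $\|v^u\|=0$. Hence $v\in E^c_f(x)\oplus E^s_f(x)$, and as $v$ was arbitrary, $F^c_f(x)\subset E^c_f(x)\oplus E^s_f(x)$. For the ``in other words'' assertion I would run the symmetric argument exchanging the roles of $\bar{x}$ and $\bar{y}$ to obtain $E^c_f(x)\subset F^c_f(x)\oplus E^s_f(x)$; since $E^s_f(x)$ is common to both splittings and $\dim\big(E^s_f(x)\oplus E^c_f(x)\big)=\dim\big(E^s_f(x)\oplus F^c_f(x)\big)$, the inclusion together with the dimension count gives $E^s_f(x)\oplus E^c_f(x)=E^s_f(x)\oplus F^c_f(x)$, so $E^{cs}_f$ is independent of the orbit.

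I expect the only delicate point to be the uniform comparison $\|Df^n_x v\|\geq c_0\|Df^n_x v^u\|$: this is precisely where the uniform transversality of the $\bar{x}$-splitting along the forward orbit is needed, to guarantee that the constant $c_0$ does not decay with $n$ and thereby swallow the exponential gap $\mu/\gamma_2>1$ on which the contradiction rests.
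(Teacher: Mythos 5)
Your proof is correct and follows essentially the same approach as the paper: isolate the $E^u_f$-component of a rival center vector and kill it by comparing the forced growth rate $\mu^n$ against the upper bound $C\gamma_2^n$ coming from item (3) applied along the second orbit, using that the forward orbit is shared. The only cosmetic difference is that the paper obtains the contradiction via the triangle inequality $\|Df^n_{x}v^u\|\leq\|Df^n_{x}v^{cs}\|+\|Df^n_{x}v\|$ rather than invoking uniform transversality to get the constant $c_0$; both routes work, and your dimension-count for the ``in other words'' clause is a fine (implicit in the paper) addition.
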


\begin{figure}[!htb]
\centering
\includegraphics[scale=0.7]{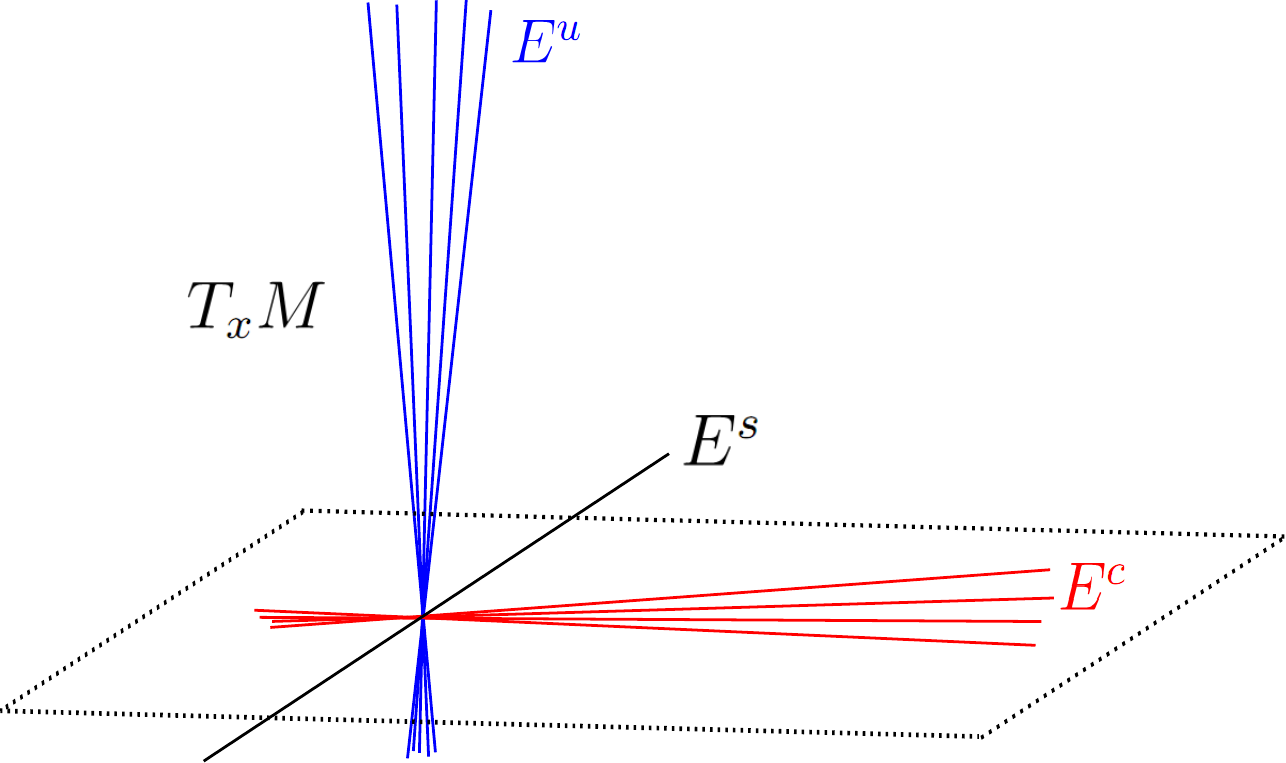}
\caption{Uniqueness of $E^{cs}.$}\label{fig3}
\end{figure}

\begin{proof}
Suppose that for $x \in M$ there is $F_f^c(x)\nsubseteq E^c_f(x)\oplus E^s_f(x),$ then for there is a non null vector $v^c \in F^c_f(x)$ such that  $v^c = v^u + v^{cs},$ where
$v^u\in E^u_f(x)\setminus\{0\},$ and $v^{cs}\in E^c_f(x)\oplus E^s_f(x).$ It  implies that $$||Df_{x_0}^n(v^u)||\leq ||Df_{x_0}^n(v^{cs})||+||Df^n_{x_0}(v^{c})||,$$ this is a contradiction.

\end{proof}

\begin{proposition}\label{prop angulo-zero}
Let $f:M\rightarrow M$ be a partially hyperbolic endomorphism. Given $E^u_1(x),$ $E^u_2(x)$ two unstable directions and $E^c_1(x),$ $E^c_2(x)$ two center direction in the point $x,$ then

(I) The angle between the spaces $Df^n_x(E^c_1(x))$ and $Df_x^n(E^c_2(x))$ converge to zero as $n\rightarrow +\infty.$

(II) The angle between the spaces  $Df^n_x(E^u_1(x))$ and $Df_x^n(E^u_2(x))$ converge to zero as $n\rightarrow +\infty.$

\end{proposition}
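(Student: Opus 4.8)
\noindent\emph{Sketch of the intended proof.} The plan is to deduce both items from one exponential-domination mechanism, exploiting that along any orbit the constants of Definition~\ref{def 1} are strictly ordered, $\nu<\gamma_1\le\gamma_2<\mu$. The elementary fact I will use repeatedly is this: if $v$ is a unit vector decomposed as $v=v_L+v_R$ with $v_L$ in a subspace $L\subset T_xM$, then $Df^n_x v_L\in Df^n_x(L)$, and hence for the normalized image one has $\operatorname{dist}\!\big(Df^n_x v/\|Df^n_x v\|,\,Df^n_x(L)\big)\le \|Df^n_x v_R\|/\|Df^n_x v\|$. Since the distance of a unit vector to a subspace equals the sine of the angle between them, it suffices in each case to choose $L$ as the image of the fastest-growing summand and to check that this quotient tends to $0$.

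For item (I), fix center directions $E^c_1(x),E^c_2(x)$ and a unit vector $v\in E^c_2(x)$. By Lemma~\ref{obs 2}, $E^c_2(x)\subset E^{cs}_f(x)=E^c_1(x)\oplus E^s_f(x)$, so in the orbit-$1$ splitting the unstable component of $v$ is absent and $v=v^c+v^s$ with $v^c\in E^c_1(x)$ and $v^s\in E^s_f(x)$; this is the step where Lemma~\ref{obs 2} is essential, since an unstable component would grow at rate $\mu$ and spoil the domination. Taking $L=E^c_1(x)$, so that $v_R=v^s$, conditions (2) and (3) of Definition~\ref{def 1} give $\|Df^n_x v^s\|\le C\nu^n\|v^s\|$ and $\|Df^n_x v\|\ge C^{-1}\gamma_1^n$ (as $v$ is a unit center vector for orbit $2$). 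Hence the quotient above is at most $C^2\|v^s\|(\nu/\gamma_1)^n$, which tends to $0$ because $\nu<\gamma_1$.

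For item (II), fix unstable directions $E^u_1(x),E^u_2(x)$ and a unit vector $w\in E^u_2(x)$, and decompose it in the orbit-$1$ splitting, $w=w^s+w^c+w^u$ with $w^s\in E^s_f(x)$, $w^c\in E^c_1(x)$, $w^u\in E^u_1(x)$. Now the distinguished summand is $L=E^u_1(x)$, and the crucial point is that $w^u\neq0$: since $T_xM=E^{cs}_f(x)\oplus E^u_2(x)$ and $E^{cs}_f(x)$ is orbit-independent, a nonzero $w\in E^u_2(x)$ cannot lie in $E^{cs}_f(x)$. Here $v_R=w^s+w^c$, so conditions (2)--(4) of Definition~\ref{def 1} give $\|Df^n_x v_R\|\le C\nu^n\|w^s\|+C\gamma_2^n\|w^c\|$ and $\|Df^n_x w\|\ge C^{-1}\mu^n$. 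The quotient is therefore at most $C^2\big(\|w^s\|(\nu/\mu)^n+\|w^c\|(\gamma_2/\mu)^n\big)\to0$, since both $\nu<\mu$ and $\gamma_2<\mu$.

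In both items the component norms $\|v^s\|$, $\|w^s\|$, $\|w^c\|$ are bounded uniformly over unit vectors, because the splitting at $x$ is a fixed direct sum with angles bounded away from zero by the Transversality Proposition; thus every unit vector of $Df^n_x(E^\sigma_2(x))$ lies within $o(1)$ of $Df^n_x(E^\sigma_1(x))$ as $n\to+\infty$. Since $Df^n_x$ is a linear isomorphism, $\dim Df^n_x(E^\sigma_1(x))=\dim Df^n_x(E^\sigma_2(x))=\dim E^\sigma_f$, and for subspaces of equal dimension one-sided closeness of this kind forces the largest principal angle to vanish, which yields the claimed convergence of the angle to $0$. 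I expect the only genuinely delicate points to be the identification and nonvanishing of the dominating summand---furnished by Lemma~\ref{obs 2} in (I) and by the transversal direct-sum decomposition $T_xM=E^{cs}_f(x)\oplus E^u_2(x)$ in (II)---and the passage from the vector estimate to a statement about the angle between subspaces, which is precisely where the equality of dimensions is used.
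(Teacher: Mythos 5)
Your proof is correct and follows essentially the same route as the paper's: both rest on Lemma~\ref{obs 2} to reduce the discrepancy between the two $\sigma$-directions to a slower-growing component, which is then killed by the domination $\nu<\gamma_1$ (resp.\ $\nu,\gamma_2<\mu$). You are in fact somewhat more careful than the paper, which writes out only case (I), pairs $v_1$ and $v_2$ loosely (the identity $v_1-v_2=v^s$ holds only for the choice of $v_1$ as the $E^c_1$-component of $v_2$, not for arbitrary vectors), and leaves implicit both case (II) and the passage from a vector estimate to the angle between subspaces of equal dimension --- all points your version handles explicitly.
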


\begin{proof}
Let us prove for the case $(I).$ Consider $E^c_1$ and $E^c_2$ two different center directions defined on the same point $x.$ Take any vectors $v_i  \in E^c_i, i =1,2$ non null vectors. By Lemma \ref{obs 2}, there is a vector $v_s \in E^s,$ such that

$$v_1 - v_2 = v^s.$$
If $v^s = 0,$ there is nothing to prove. Suppose that $v^s \neq 0.$ Consider \mbox{$0 <\varepsilon_0  =  \min \{||v_1||, ||v_2||\}.$}  By definition of partially hyperbolic endomorphism,

$$|| \frac{1}{\gamma_1^n}  [Df^n_x \cdot (v_1 - v_2)]|| =  \frac{1}{\gamma_1^n}  ||[Df^n_x \cdot (v^s)]|| \leq C (\frac{\nu}{\gamma_1})^n \rightarrow 0, $$
when $n \rightarrow +\infty.$ Since $|| \frac{1}{\gamma_1^n}  [Df^n_x \cdot (v_i)]|| \geq C^{-1}\varepsilon_0, i = 1,2.$ Also $$|| \frac{1}{\gamma_1^n}  [Df^n_x \cdot (v_1)]|| - || \frac{1}{\gamma_1^n}  [Df^n_x \cdot (v_2)]|| \rightarrow 0,$$ when  $n \rightarrow +\infty,$ we conclude that the angle between $ Df^n_x \cdot v_1$ and  $ Df^n_x \cdot v_2$ converges to $0.$ Since $v_1$ and $v_2$ are taken arbitrary, the angle between the spaces $Df^n(E^c_1(x))$ and $Df^n(E^c_2(x))$ converge to zero as $n\rightarrow\infty.$

\end{proof}

\begin{proposition}[Adapted Metrics]\label{apt_metric} Let $f$ be a partially hyperbolic endomorphism. Then there is a Riemannian metric
$\langle\cdot,\cdot\rangle'$ and constants  $0<\nu'<\gamma_1'\leq\gamma_2'<\mu'$ with $\nu'<1$ and $\mu'>1$ such that for each $f$-orbit $(x_n)_{n\in{\Z}}$, holds

\begin{itemize}
 \item[(1)] $||Df_{x_n}(v^s)||'\leq \nu'||v^s||',\,\,\,\,\,\,\,\,\,\,\,\,\,\,\,\,\,\,\,\,\,\,\,\,\,\,\,\,\,\,\,\,\,\,\,\,\,\,\,\,\,\,\,\,\forall\,\, v^s\in E_{x_n}^s,$
 \item[(2)] $\gamma'_1||v^c||'\leq||Df_{x_n}(v^c)||'\leq \gamma'_2||v^c||',\,\,\,\,\,\,\,\,\,\,\,\,\,\,\,\,\,\,\forall\,\,\, v^c\in E_{x_n}^c,$
 \item[(3)] $\mu'||v^u||'\leq||Df_{x_n}(v^u)||',\,\,\,\,\,\,\,\,\,\,\,\,\,\,\,\,\,\,\,\,\,\,\,\,\,\,\,\,\,\,\,\,\,\,\,\,\,\,\,\,\,\,\forall\,\, v^u\in E_{x_n}^u.$
\end{itemize}
\end{proposition}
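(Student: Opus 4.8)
The statement is the standard \emph{adapted (Lyapunov) metric} lemma: the multi-step estimates of Definition \ref{def 1}, which carry the constant $C$, are to be upgraded to one-step estimates in a new metric. The plan is to build $\langle\cdot,\cdot\rangle'$ by a finite forward Ces\`{a}ro averaging over a window of length $N$, chosen large enough that the geometric gain beats $C$. Concretely, I first fix new constants
\[
\nu<\nu'<\gamma_1'<\gamma_1\le\gamma_2<\gamma_2'<\mu'<\mu,\qquad \nu'<1<\mu',
\]
which is possible since the original constants already satisfy $\nu<\gamma_1\le\gamma_2<\mu$; the whole point is to exhibit norms realizing these as honest one-step rates. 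Throughout I use that, for a fixed point $x$, the forward iterates $Df^{k}_{x}$ are determined by $x$ alone, so any norm defined by forward sums is independent of the chosen orbit $\bar x$.

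For the stable and unstable bundles a single forward sum suffices. On $E^{s}$ set $\|v^{s}\|'^{2}=\sum_{k=0}^{N-1}\nu'^{-2k}\,\|Df^{k}_{x}v^{s}\|^{2}$, and on $E^{u}$ set $\|v^{u}\|'^{2}=\sum_{k=0}^{N-1}\mu'^{-2k}\,\|Df^{k}_{x}v^{u}\|^{2}$. A one-line telescoping gives the identities
\[
\|Df_{x}v^{s}\|'^{2}-\nu'^{2}\|v^{s}\|'^{2}=\nu'^{2}\Big(\nu'^{-2N}\|Df^{N}_{x}v^{s}\|^{2}-\|v^{s}\|^{2}\Big),
\]
\[
\|Df_{x}v^{u}\|'^{2}-\mu'^{2}\|v^{u}\|'^{2}=\mu'^{2}\Big(\mu'^{-2N}\|Df^{N}_{x}v^{u}\|^{2}-\|v^{u}\|^{2}\Big).
\]
By items (2) and (4) of Definition \ref{def 1} the right-hand brackets become non-positive, respectively non-negative, as soon as $C(\nu/\nu')^{N}\le 1$ and $C(\mu'/\mu)^{N}\le 1$, which holds for all large $N$ because $\nu/\nu'<1$ and $\mu'/\mu<1$. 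This yields $\|Df v^{s}\|'\le\nu'\|v^{s}\|'$ and $\|Df v^{u}\|'\ge\mu'\|v^{u}\|'$, with the constant $C$ absorbed.

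The center is the delicate case, since it requires a two-sided estimate $\gamma_1'\|v^{c}\|'\le\|Df v^{c}\|'\le\gamma_2'\|v^{c}\|'$. The upper bound is again obtained from a forward sum with weight $\gamma_2'>\gamma_2$, exactly as above using item (3). The lower bound, however, cannot be produced by forward averaging: it is a lower bound on the forward center rate, equivalently an upper bound on backward center expansion, and the constant $C$ in (3) survives every forward telescoping at the single-step level. I therefore add a backward sum along the orbit, $\sum_{k=1}^{N}\gamma_1'^{2k}\,\|Df^{-k}v^{c}\|^{2}$ with $\gamma_1'<\gamma_1$, whose telescoping, via the backward form of (3), forces $\|Df v^{c}\|'\ge\gamma_1'\|v^{c}\|'$ once $C(\gamma_1'/\gamma_1)^{N}\le 1$. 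Declaring $E^{s},E^{c},E^{u}$ mutually $\langle\cdot,\cdot\rangle'$-orthogonal then assembles the three pieces.

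The main obstacle is exactly this center step, on two counts. Analytically, the forward part (good for the upper bound) and the backward part (good for the lower bound) each have a \emph{bad} side, and naively adding them degrades the opposite estimate by a bounded factor; the remedy is to weight the two sums unequally and take $N$ large, so that the cross contributions are negligible and the domination gaps $\nu<\gamma_1$ and $\gamma_2<\mu$ leave room to keep the center rates strictly inside $(\nu',\mu')$. Structurally, the backward center sum depends on the past of $\bar x$, so the resulting object is a priori a metric along orbits rather than on $M$. This is reconciled using that $E^{s}$ is uniquely defined and, by Lemma \ref{obs 2}, so is $E^{cs}=E^{c}\oplus E^{s}$, together with the continuity of the bundles (Proposition \ref{prop 07}) and the compactness of $M^{f}$: these give uniform constants and let the construction be carried out on $M^{f}$, in the same sense in which the metric of Definition \ref{def 1} is understood. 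A final mollification produces a genuine smooth Riemannian metric for which the same one-step inequalities persist, after an arbitrarily small relaxation of the constants.
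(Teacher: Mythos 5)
Your telescoping identities for the weighted forward sums are correct, and for the stable and unstable bundles your construction works and is a weighted variant of what the paper does (the paper uses a single \emph{unweighted} forward sum $\langle v_1,v_2\rangle'_x=\sum_{j=0}^{N-1}\langle Df^j_{x}v_1,Df^j_{x}v_2\rangle$ on all of $T_xM$ and reads off all three rates from the one identity $\|Df v\|'^2=\|v\|'^2-\|v\|^2+\|Df^N v\|^2$). But your central claim --- that the lower center bound ``cannot be produced by forward averaging'' because ``the constant $C$ survives every forward telescoping'' --- is false, and it is what derails the proof. With the forward sum weighted by $\gamma_1'^{-2k}$, $\gamma_1'<\gamma_1$, the same identity gives $\|Df v^c\|'^2-\gamma_1'^2\|v^c\|'^2=\gamma_1'^2\bigl(\gamma_1'^{-2N}\|Df^N v^c\|^2-\|v^c\|^2\bigr)$, and the bracket is nonnegative as soon as $C(\gamma_1'/\gamma_1)^N\le 1$, by the $N$-step lower bound in item (3) of Definition \ref{def 1}. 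The constant $C$ is absorbed by the boundary term exactly as in your unstable case. This is precisely how the paper obtains $\gamma_1'$; no backward sum is needed. (The only genuine subtlety, which both you and the paper gloss over, is arranging the two center rates from a \emph{single} quadratic form while keeping $\nu'<\gamma_1'\le\gamma_2'<\mu'$; the paper handles this by using one unweighted sum and the two-sided estimate (3) on the same boundary term, then increasing $N$.)

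The backward sum is not merely unnecessary --- it breaks the statement you are proving. For a non-invertible $f$ the backward orbit through $x$ is not unique, so $Df^{-k}v^c$ is undefined until an element of $M^f$ is chosen, and the bundles $E^c_f(x)$, $E^u_f(x)$ themselves depend on that choice. Hence your assembled inner product (backward center sum, plus declaring $E^s\oplus E^c\oplus E^u$ orthogonal with different weights per bundle) is a family of quadratic forms indexed by $M^f$, not a Riemannian metric on $M$, which is what the proposition asserts and what the rest of the paper uses. Lemma \ref{obs 2} gives uniqueness of $E^{cs}$, not of $E^c$ or of the backward derivative, so the appeal to it, to continuity, and to compactness of $M^f$ does not repair the well-definedness. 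The fix is simply to drop the backward sum and the per-bundle assembly: define the metric by a forward sum on all of $T_xM$, which depends only on $x$ (as you yourself observe), and extract all three estimates from the telescoped boundary term.
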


\begin{proof}
Let \mbox{$0<\nu<\gamma_1\leq \gamma_2<\mu$} and $C>1$ be the constants as in the Definition \ref{def 1} and consider $N$ big enough such that $C\nu^N< 1$ and $(C^{-1}\mu^N)^2>  1.$
For $v_1,v_2\in T_xM,$ we define the metric
$$
\langle v_1,v_2\rangle'_x=\displaystyle\sum_{j=0}^{N-1}\langle Df^j_{x_0}(v_1),Df^j_{x_0}(v_2)\rangle_{x_j}.
$$
Since $M$ is a compact manifold and $f$ is $C^1$ there is a
constant $K > 1$ such that for all $x\in M$ and $v\in T_xM$,  $\langle v,v\rangle\leq\langle v,v\rangle'\leq K\langle v,v\rangle.$ If $v^s\in E^s_f(x_0),$ then
\begin{align*}
||Df_{x_0}(v^s)||'&=\sqrt{\displaystyle\sum_{j=0}^{N-1}\langle Df^{j+1}_{x_{0}}(v^s),Df^{j+1}_{x_{0}}(v^s)\rangle_{x_{j+1}}}\\
&=\sqrt{\langle v^s,v^s\rangle'_{x_0}+\langle Df^N_{x_0}(v^s),Df^N_{x_0}(v^s)\rangle_{x_N}-\langle v^s,v^s\rangle_{x_0}}\\
&\leq\sqrt{\langle v^s,v^s\rangle'_{x_0}+[(C\nu^N)^2-1]\langle v^s,v^s\rangle_{x_0}}\\
&\leq C\nu^N ||v^s||'.
\end{align*}

We can take $\nu'= C\nu^N.$ Now  If $v^u\in E^u_f(x_0),$
\begin{align*}
||Df_{x_0}(v^u)||'&=\sqrt{\displaystyle\sum_{j=0}^{N-1}\langle Df^{j+1}_{x_{0}}(v^u),Df^{j+1}_{x_{0}}(v^u)\rangle_{x_{j+1}}} &\geq\sqrt{\left[1+\frac{(C^{-1}\mu^N)^2-1}{K}\right]}||v^u||'.
\end{align*}


Then we can take  $\mu'=\sqrt{\left[1+\frac{(C^{-1}\mu^N)^2-1}{K}\right]}.$ Consider $v^c\in E^c_f(x_0),$ we can get
\begin{align*}
\sqrt{\left[1+\frac{(C\gamma_1^N)^2-1}{K}\right]}||v^c|| \leq ||Df_{x_0}(v^c)||\leq C\gamma_2^N||v^c||'.
\end{align*}

We can take $\gamma_1'= \sqrt{\left[1+\frac{(C\gamma_1^N)^2-1}{K}\right]}$ and $\gamma_2'= C\gamma_2^N.$ If it is necessary, we increase $N$ to ensure that $0<\nu'<\gamma_1'\leq\gamma_2'<\mu'$ with $\nu'<1$ and $\mu'>1,$ as required.

\end{proof}

\begin{proposition}[\cite{MP75}]\label{prop MP1}
If $f: M \rightarrow M$ is a  local diffeomorphism, then at the level of universal cover the lift
$\widetilde{f}:\widetilde{M}\rightarrow\widetilde{M}$ is indeed a diffeomorphism.
\end{proposition}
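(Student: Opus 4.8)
The plan is to construct $\widetilde f$ by the covering-space lifting criterion and then to recognize it as a covering map of $\widetilde M$ onto itself; since $\widetilde M$ is simply connected, any connected covering of it is trivial, which forces $\widetilde f$ to be a bijection and hence a diffeomorphism.

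First I would fix the universal covering $\pi:\widetilde M\to M$. Because $M$ is compact and connected and $f$ is a local diffeomorphism, $f$ is an open map with compact (hence closed) image, so $f$ is onto, and each fiber is discrete and compact, hence finite; thus $f$ is a finite-sheeted covering. Now consider the continuous map $f\circ\pi:\widetilde M\to M$. Since $\widetilde M$ is simply connected and locally path connected, the lifting criterion applied to the covering $\pi$ produces a map $\widetilde f:\widetilde M\to\widetilde M$, unique after normalizing one basepoint, with $\pi\circ\widetilde f=f\circ\pi$. Reading this relation in charts where $\pi$ restricts to a diffeomorphism shows that locally $\widetilde f=(\pi|_V)^{-1}\circ f\circ(\pi|_U)$, so $\widetilde f$ is a $C^1$ local diffeomorphism.

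The crucial point is to upgrade $\widetilde f$ from a local to a global diffeomorphism, and for this I would pass to Riemannian language. Fix a metric $g$ on $M$ and set $\widetilde g=\pi^{*}g$, which is complete because $\pi$ is a Riemannian covering of the compact, hence complete, manifold $(M,g)$. Since $M$ is compact, the metrics $g$ and $f^{*}g$ are uniformly comparable, say $c^{-1}g\le f^{*}g\le c\,g$ for some $c\ge 1$; pulling back by $\pi$ yields $c^{-1}\widetilde g\le h\le c\,\widetilde g$, where $h:=(f\circ\pi)^{*}g$. Uniform comparability preserves completeness, so $(\widetilde M,h)$ is complete as well. Finally, from $\pi\circ\widetilde f=f\circ\pi$ one gets $\widetilde f^{*}\widetilde g=\widetilde f^{*}\pi^{*}g=(f\circ\pi)^{*}g=h$, i.e. $\widetilde f:(\widetilde M,h)\to(\widetilde M,\widetilde g)$ is a local isometry.

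To conclude, I would invoke the standard fact that a local isometry defined on a complete Riemannian manifold is automatically a covering map. Applied here, $\widetilde f$ is a covering of $\widetilde M$ by $\widetilde M$; as the base is simply connected and the total space connected, this covering has a single sheet, so $\widetilde f$ is bijective, and being a local diffeomorphism it is a diffeomorphism. I expect the only genuine obstacle to be the global injectivity and surjectivity of $\widetilde f$, which is precisely what the completeness-plus-local-isometry theorem resolves; the remaining verifications (existence of the lift, its smoothness, and the metric comparison) are routine.
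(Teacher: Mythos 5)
Your proof is correct. Note first that the paper itself gives no argument for this proposition: it is stated with a citation to Ma\~{n}\'{e}--Pugh and used as a black box, so there is no ``paper proof'' to match; your write-up is a legitimate self-contained justification. The two halves are both sound: the lifting criterion applies because $\pi_1(\widetilde M)$ is trivial, and the local formula $(\pi|_V)^{-1}\circ f\circ(\pi|_U)$ gives that $\widetilde f$ is a $C^1$ local diffeomorphism; the completeness of $(\widetilde M,h)$ via uniform comparability of $g$ and $f^*g$ on the compact $M$, together with the theorem that a local isometry out of a complete manifold is a covering map, correctly forces $\widetilde f$ to be a one-sheeted covering of the simply connected $\widetilde M$, hence a diffeomorphism. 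One remark: the Riemannian detour is avoidable. Since $f$ is a covering map (as you observe), $f\circ\pi:\widetilde M\to M$ is a simply connected covering of $M$, hence a universal covering; the lift $\widetilde f$ is then a morphism between two universal coverings of $M$ and is automatically an isomorphism of coverings by the uniqueness of the universal cover, giving bijectivity purely topologically. Your metric argument buys nothing extra here but is perfectly rigorous, and it has the mild advantage of not requiring the (slightly glossed) verification that a proper local homeomorphism is a covering, since the local-isometry theorem does not use that $f$ itself is a covering.
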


\begin{proposition}\label{pro 6}
Let $f:M\rightarrow M$ be a $C^1$ local diffeomorphism then, $f$ be a partially hyperbolic endomorphism if and only if the lift $\widetilde{f}$  defined in the universal cover is a partially hyperbolic diffeomorphism.
\end{proposition}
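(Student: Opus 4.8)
The plan is to set up a dictionary between the dynamics of $f$ on the inverse limit $M^f$ and the dynamics of the genuine diffeomorphism $\widetilde{f}$ (a diffeomorphism by Proposition \ref{prop MP1}) on $\widetilde{M}$, and then to transport the invariant splittings across this dictionary. Endow $\widetilde{M}$ with the Riemannian metric lifted from $M$, so that the covering projection $p\colon\widetilde{M}\to M$ is a local isometry and $Dp\circ D\widetilde{f}=Df\circ Dp$; consequently $\|D\widetilde{f}^{\,n}_{\widetilde{x}}\widetilde{v}\|=\|Df^{\,n}_{x}\,Dp\,\widetilde{v}\|$ for every $\widetilde{v}$ and $n\ge 0$. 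Since the rate conditions in Definition \ref{def 1} only involve pointwise norms of finitely many forward iterates, they are preserved verbatim under $Dp$. The second ingredient is the map $\pi\colon\widetilde{M}\to M^f$, $\pi(\widetilde{x})=(p(\widetilde{f}^{\,n}\widetilde{x}))_{n\in\Z}$, which is continuous for the metric $\bar{d}$ and satisfies $\pi\circ\widetilde{f}=\bar{f}\circ\pi$.

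For the implication ``$f$ partially hyperbolic endomorphism $\Rightarrow$ $\widetilde{f}$ partially hyperbolic diffeomorphism'': because $\widetilde{f}$ is invertible, each $\widetilde{x}$ has a unique full orbit, so $\pi(\widetilde{x})\in M^f$ is a well-defined orbit and I may set $\widetilde{E}^{\sigma}(\widetilde{x}):=(Dp_{\widetilde{x}})^{-1}\big(E^{\sigma}_f(\pi(\widetilde{x}))\big)$ for $\sigma\in\{s,c,u\}$, reading $E^{\sigma}_f$ off the zero coordinate. Using $Df\circ Dp=Dp\circ D\widetilde{f}$ one checks that these point-bundles are $D\widetilde{f}$-invariant; they are continuous because $\pi$ is continuous and the bundles $E^\sigma_f$ vary continuously on $M^f$ (Proposition \ref{prop 07}); and they satisfy conditions (2)--(4) by the isometry identity above. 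Hence $\widetilde{f}$ is partially hyperbolic.

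The converse ``$\widetilde{f}$ partially hyperbolic diffeomorphism $\Rightarrow$ $f$ partially hyperbolic endomorphism'' is the delicate direction, and the main obstacle is that a general orbit $\bar{x}=(x_n)\in M^f$ does \emph{not} lift to a single $\widetilde{f}$-orbit (already for the doubling map of the circle, $\pi$ is the dense immersion of one leaf of the solenoid, far from surjective). I would therefore not try to realize $\bar{x}$ inside $\widetilde{M}$. Instead I first observe the forward characterizations $\widetilde{E}^s=\{\widetilde{v}:\|D\widetilde{f}^{\,n}\widetilde{v}\|\le C\nu^n\|\widetilde{v}\|\}$ and $\widetilde{E}^{cs}=\widetilde{E}^s\oplus\widetilde{E}^c=\{\widetilde{v}:\|D\widetilde{f}^{\,n}\widetilde{v}\|\le C\gamma_2^{\,n}\|\widetilde{v}\|\}$, valid because any nonzero $\widetilde{E}^u$-component forces growth of order $\mu^n>\gamma_2^{\,n}$. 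For $n\ge 0$ one has $\widetilde{f}^{\,n}\gamma=(\widetilde{f}^{\,n}\gamma\widetilde{f}^{-n})\widetilde{f}^{\,n}$ with $\widetilde{f}^{\,n}\gamma\widetilde{f}^{-n}\in\Gamma$ acting as an isometry, so these forward rates are invariant under the deck group $\Gamma$; hence $\widetilde{E}^s$ and $\widetilde{E}^{cs}$ descend to $Df$-invariant continuous bundles $E^s,E^{cs}$ on $M$ (matching the uniqueness of $E^s$ and of $E^{cs}$ recorded in Lemma \ref{obs 2}), which I assign to every coordinate $x_i$ of every orbit.

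It remains to produce the orbit-dependent bundles $E^u_f$ and $E^c_f$. For a fixed $\bar{x}$, lift each $x_{-m}$ to some $\widetilde{y}_m\in\widetilde{M}$, take the genuine unstable space $\widetilde{E}^u(\widetilde{y}_m)$ of $\widetilde{f}$ there, and push it forward along the branch, setting $V_m:=Df^{\,m}_{x_{-m}}\big(Dp\,\widetilde{E}^u(\widetilde{y}_m)\big)$. Since $Df(Dp\,\widetilde{E}^u(\widetilde{y}_{m+1}))$ and $Dp\,\widetilde{E}^u(\widetilde{y}_m)$ are both unstable directions at $x_{-m}$, the comparison of $V_m$ with $V_{m+1}$ is governed by Proposition \ref{prop angulo-zero}(II); the \emph{uniform} version of that angle contraction — available because the partial hyperbolicity is absolute and $M$ is compact — makes $(V_m)$ a Cauchy sequence in the Grassmannian whose limit is independent of the auxiliary choices. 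I would then define $E^u_f(x_0):=\lim_m V_m$, build $E^{cu}_f$ by the identical limit applied to $\widetilde{E}^{cu}=\widetilde{E}^c\oplus\widetilde{E}^u$, and set $E^c_f(x_0):=E^{cu}_f(x_0)\cap E^{cs}(x_0)$; shifting the branch produces the bundles at every $x_i$ together with their $Df$-invariance. Finally I would verify that the rate conditions (3)--(4), being closed conditions on finitely many forward iterates, pass to these limits (using the adapted metric of Proposition \ref{apt_metric} to control the norms), and that $E^u_f$ stays in the unstable cone and is therefore transverse to $E^{cs}$, so that $E^s\oplus E^c_f\oplus E^u_f=T_{x_0}M$ is the required splitting. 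The crux of the whole argument is precisely this uniform convergence of the forward-pushed unstable directions $V_m$, i.e.\ upgrading Proposition \ref{prop angulo-zero} to an exponential, choice-independent estimate.
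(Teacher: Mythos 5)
Your proof is correct, and its first half is the paper's argument verbatim: pull the splitting of the projected orbit back through $Dp$ and use $Dp\circ D\widetilde f=Df\circ Dp$ together with the fact that the covering projection is a local isometry (the paper phrases this as ``uniform limitation of $D\pi$ and $D\pi^{-1}$''). In the converse direction you and the paper rely on the same underlying mechanism --- lift the backward coordinates $x_{-m}$ of $\bar x$ to the cover, transport the invariant spaces of $\widetilde f$ forward to $x_0$, and pass to a limit --- but you close the argument differently. The paper simply extracts a convergent subsequence of the transported spaces by compactness of the Grassmannian and observes that conditions (2)--(4) of Definition \ref{def 1} are closed inequalities with uniform constants, hence pass to any subsequential limit; no angle estimate is needed, and uniqueness of the resulting splitting is handled separately by Proposition \ref{uniqueness}. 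You instead prove genuine convergence of the sequence $V_m$ via a uniform exponential contraction of angles between unstable directions at a common point, which you rightly identify as the crux: this estimate is true in the absolute setting (it is the quantitative form of Proposition \ref{prop angulo-zero}, obtainable from Lemma \ref{obs 2} together with the uniform transversality of $E^u$ and $E^{cs}$, or from uniform contraction of unstable cones), but you assert it rather than prove it, and it is strictly more than the statement requires. Your route buys a canonical, choice-independent limit and re-derives per-orbit uniqueness as a by-product; the paper's subsequence trick is shorter and bypasses the estimate entirely. Your separate construction of $E^s$ and $E^{cs}$ as deck-invariant bundles on $M$ and the definition $E^c_f=E^{cu}_f\cap E^{cs}_f$ are consistent with Lemma \ref{obs 2} and unobjectionable.
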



\begin{proof}

Denote by $\pi: \widetilde{M} \rightarrow M$ the natural projection such that $\pi(\tilde{x}) = x,$ a $C^{\infty}$ cover map. The fundamental property says us that $\pi \circ \widetilde{f} = f \circ \pi.$ In this way, a bi infinite orbit of $\widetilde{f}$ projects to a bi infinite orbit of $f.$

Consider $\Pi \subset M^f$ the subset constituted by all bi infinite orbits which are projected of bi infinite orbits of $\widetilde{f}.$

Consider $\bar{x} = (x_n)_{n \in \mathbb{Z}} \in \Pi.$ By partial hyperbolicity, consider $E^{\sigma}_{f}(x_i), i \in \mathbb{Z}, \,\mbox{with}\, \sigma \in\{s,c,u\}.$ Let $(y_i)_{i \in \mathbb{Z}}$ be an orbit of $\widetilde{f}$ such that $\pi(y_n) = x_n, n \in \mathbb{Z}.$ For each integer $n,$ consider $E^{\sigma}_f(y_n) = D\pi^{-1}_{x_n} E^{\sigma}_f(x_n),$ where $\pi^{-1}: U_n \rightarrow V_n$ is a $C^{\infty}$ diffeomorphism between open neighborhoods of $x_n$ and $y_n$ respectively.

Since

$$D\pi_{\widetilde{f}(y)} D\widetilde{f}^n_{y} = Df^n_{\pi(y)}D\pi_{y}, $$
and by uniform limitation of $D\pi$ and $D\pi^{-1},$ we obtain $\widetilde{f}$ is partially hyperbolic.

Now suppose that $\widetilde{f}$ is partially hyperbolic. for a given $\bar{x} \in M^f, \bar{x} =  (x_n)_{n \in \mathbb{Z}},$ for each $n < 0,$ consider $y_n \in \widetilde{M}$ such that $\pi (y_n) = x_n.$ Consider $y_0^{(n)}= \widetilde{f}^n(y_n).$ The bi infinite orbit of $y_0^{(n)}$ projects to an orbit $z^{(n)} =  (z^{(n)})_{k \in \mathbb{Z}} \in M^f$ such that $z^{(n)}_k = x_k,$ for all $k \geq n.$ In $M^f $ we obtain $z^{(n)} \rightarrow \bar{x}.$ Unless to take a subsequence we can suppose that $D\pi_{y_0^{(n)}} E^{\sigma}_{\widetilde{f}}(y_0^{(n)}) \rightarrow F^{\sigma}_f(x_0).$ Now fixing $k \geq 0,$ an integer number
$$D\pi_{\widetilde{f}(y)} D\widetilde{f}^k_{y} = Df^k_{\pi(y)}D\pi_{y}, $$
using the uniform limitation of $D\pi$ and $D\pi^{-1},$ and continuity of $Df,$ the bundles $F^s_f(x_0), F^c_f(x_0), F^u_f(x_0),$ satisfies the partially hyperbolic definition of $f$ for $\bar{x}.$

By $D\widetilde{f}$ invariance $F^{\sigma}_f(x_{i}) = \displaystyle\lim_{n  \rightarrow +\infty} D\pi_{y_i^{(n)}} E^{\sigma}_{\widetilde{f}}(y_i^{(n)}),$ where $y_i^{(n)} = \widetilde{f}^{n+i}(y_n),$ for every $i \in \mathbb{Z}.$

Finally, we conclude that $f$ is a partially hyperbolic endomorphism.

\end{proof}

The above proposition allows us to study partially hyperbolic endomorphism by using good properties of partially hyperbolic diffeomorphism. A useful tool in the theory of partially hyperbolic diffeomorphism is cone conditions. In fact, this is a more concrete way to obtain partial hyperbolicity.

For each $\bar{x}\in M^f$ consider $E^u_f(\bar{x}),E^c_f(\bar{x})\subset T_xM,$ where $E^{\sigma}_f(\bar{x})$ denotes $E^{\sigma}_f(x) ,$ as in definition of partial hyperbolicity, and $x$ is the zero coordinate of $\bar{x}.$ Denote by $p:M^f\rightarrow M$ the canonical projection and for each $\sigma\in\{s,u,c\},$ define

$$
\mathcal{E}^{\sigma}_f(x)=\displaystyle\bigcup_{\bar{x}\,\in\, p^{-1}(x)} E^{\sigma}_f(\bar{x}).
$$

The following is a consequence of the construction in Proposition \ref{pro 6}.

\begin{proposition}\label{baixados}
For each $\sigma\in\{s,u,c\},$
$$
\mathcal{E}^{\sigma}_f(x)=\overline{\displaystyle\bigcup_{\pi(y)=x}D\pi_y(E^{\sigma}_{\widetilde{f}}(y))}.
$$
\end{proposition}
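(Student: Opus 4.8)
The plan is to prove the two inclusions separately, using the construction of the splitting carried out in Proposition \ref{pro 6} together with the continuity (Proposition \ref{prop 07}) and uniqueness (Proposition \ref{uniqueness}) of the decomposition. Throughout I would regard both sides as subsets of $T_xM$ and take closures inside $T_xM$, and abbreviate $U = \bigcup_{\pi(y)=x} D\pi_y(E^\sigma_{\widetilde f}(y))$, so that the claim reads $\mathcal{E}^\sigma_f(x) = \overline{U}$.

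First I would establish $\overline{U} \subseteq \mathcal{E}^\sigma_f(x)$. Fix a lift $y$ with $\pi(y)=x$. Since $\widetilde f$ is a diffeomorphism (Proposition \ref{prop MP1}), the full bi-infinite orbit $(\widetilde f^{\,n}(y))_{n\in\Z}$ is defined and projects to an orbit $\bar z = (\pi(\widetilde f^{\,n}(y)))_{n\in\Z}\in M^f$ with $z_0 = x$. Iterating the intertwining relation $D\pi_{\widetilde f(w)}D\widetilde f_w = Df_{\pi(w)}D\pi_w$ and using the uniform bounds on $D\pi$ and $D\pi^{-1}$, the three subspaces $D\pi_y(E^{\ast}_{\widetilde f}(y))$, $\ast\in\{s,c,u\}$, give a splitting of $T_xM$ satisfying Definition \ref{def 1} along $\bar z$; by uniqueness (Proposition \ref{uniqueness}) they coincide with $E^s_f(\bar z), E^c_f(\bar z), E^u_f(\bar z)$, so in particular $D\pi_y(E^\sigma_{\widetilde f}(y)) = E^\sigma_f(\bar z) \subset \mathcal{E}^\sigma_f(x)$. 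This yields $U \subseteq \mathcal{E}^\sigma_f(x)$. It then remains to check that $\mathcal{E}^\sigma_f(x)$ is closed. Since $p\colon M^f\to M$ is continuous and $M^f$ is compact, the fiber $p^{-1}(x)$ is compact, and by Proposition \ref{prop 07} the assignment $\bar x \mapsto E^\sigma_f(\bar x)$ is continuous into the Grassmannian. Given $v_m \in E^\sigma_f(\bar x_m)\subset \mathcal{E}^\sigma_f(x)$ with $v_m \to v$, I pass to a subsequence with $\bar x_m \to \bar x \in p^{-1}(x)$, so $E^\sigma_f(\bar x_m)\to E^\sigma_f(\bar x)$, forcing $v \in E^\sigma_f(\bar x)\subset \mathcal{E}^\sigma_f(x)$. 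Hence $\mathcal{E}^\sigma_f(x)$ is closed and $\overline{U}\subseteq \mathcal{E}^\sigma_f(x)$.

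For the reverse inclusion $\mathcal{E}^\sigma_f(x) \subseteq \overline{U}$, I would invoke the limiting construction of Proposition \ref{pro 6} directly. Given $\bar x \in p^{-1}(x)$, that proof produces lifts $y_0^{(n)} = \widetilde f^{\,n}(y_n)$ of $x_0 = x$ for which $D\pi_{y_0^{(n)}}E^\sigma_{\widetilde f}(y_0^{(n)}) \to E^\sigma_f(\bar x)$. Each term belongs to $U$, so the limit $E^\sigma_f(\bar x)$ lies in $\overline{U}$; since $\bar x\in p^{-1}(x)$ was arbitrary, this gives $\mathcal{E}^\sigma_f(x)\subseteq \overline{U}$ and completes the proof.

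The conceptual heart, and the step I expect to require the most care, is the closedness of $\mathcal{E}^\sigma_f(x)$. The set $U$ only records the $\sigma$-directions coming from orbits realized by a single lift $y$ (the set $\Pi$ of Proposition \ref{pro 6}), whereas $\mathcal{E}^\sigma_f(x)$ records the directions over \emph{every} orbit through $x$, including those obtainable only as limits of lifted orbits because $f$ is non-invertible. The identity with $\overline{U}$ is precisely the assertion that these extra directions are limits of the directly lifted ones, and making it rigorous rests on the compactness of the inverse limit $M^f$ and the continuity of the splitting; the elementary fact that $v_m\in V_m$, $V_m\to V$ in the Grassmannian and $v_m\to v$ force $v\in V$ is what lets me pass from convergence of subspaces to membership of the limiting vector.
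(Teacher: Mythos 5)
Your proof is correct and follows exactly the route the paper intends: the paper states this proposition without proof, merely calling it ``a consequence of the construction in Proposition \ref{pro 6},'' and your two inclusions are precisely the two halves of that construction (a lifted orbit projects to an element of $M^f$ whose splitting is, by uniqueness, the pushforward of the lifted splitting, giving $U\subseteq\mathcal{E}^{\sigma}_f(x)$; and every $E^{\sigma}_f(\bar x)$ arises as a limit of the subspaces $D\pi_{y_0^{(n)}}E^{\sigma}_{\widetilde{f}}(y_0^{(n)})$, giving the reverse inclusion). Your closedness argument for $\mathcal{E}^{\sigma}_f(x)$, via compactness of $p^{-1}(x)$ and the continuity in Proposition \ref{prop 07}, supplies a detail the paper leaves implicit, and it is sound.
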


 \begin{corollary}
 Let $f: M \rightarrow M$ be a partially hyperbolic endomorphism. Then the dimensions of $E^s_f, E^c_f,$ and $E^u_f$ are constant independent of $\bar{x} \in M^f.$
 \end{corollary}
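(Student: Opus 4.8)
The plan is to transfer the question to the universal cover, where $f$ becomes a genuine partially hyperbolic diffeomorphism, and then to push the dimensions back down through the covering projection $\pi\colon\widetilde{M}\to M$. By Proposition \ref{pro 6}, the lift $\widetilde{f}\colon\widetilde{M}\to\widetilde{M}$ is a partially hyperbolic diffeomorphism, with a genuine $D\widetilde{f}$-invariant splitting $E^s_{\widetilde{f}}\oplus E^c_{\widetilde{f}}\oplus E^u_{\widetilde{f}}$ over $\widetilde{M}$. Since $M$ is a connected closed manifold, its universal cover $\widetilde{M}$ is connected, which is exactly the setting where constancy of the dimensions is easiest to establish.

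First I would record that for the diffeomorphism $\widetilde{f}$ on the connected manifold $\widetilde{M}$, the numbers $\dim E^\sigma_{\widetilde{f}}(y)$ are independent of $y$. This is the standard fact for partially hyperbolic diffeomorphisms: the bundles $E^\sigma_{\widetilde{f}}$ depend continuously on the base point (by the same argument used in Proposition \ref{prop 07}, now with no orbit dependence since $\widetilde{f}$ is invertible), so $y\mapsto\dim E^\sigma_{\widetilde{f}}(y)$ is an integer-valued continuous, hence locally constant, function on $\widetilde{M}$; connectedness of $\widetilde{M}$ forces it to be constant. Denote these constants by $s,c,u$, so that $s+c+u=\dim M$.

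Next I would transfer this to an arbitrary orbit $\bar{x}=(x_n)_{n\in\Z}\in M^f$. The construction in the proof of Proposition \ref{pro 6} realizes each $E^\sigma_f(x_0)$ as a limit, along a subsequence, of the subspaces $D\pi_{y_0^{(n)}}\bigl(E^\sigma_{\widetilde{f}}(y_0^{(n)})\bigr)\subset T_{x_0}M$, where $\pi(y_0^{(n)})=x_0$. Because $\pi$ is a covering map, each $D\pi_{y_0^{(n)}}$ is a linear isomorphism, so every one of these approximating subspaces has dimension exactly the fixed value $s$, $c$ or $u$ attached to $\sigma$. Thus the whole sequence lives in a single compact Grassmannian $\mathrm{Gr}_{k}(T_{x_0}M)$, with $k$ that fixed dimension, and its limit $E^\sigma_f(x_0)$ therefore has the same dimension $k$. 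Equivalently, one may read this off Proposition \ref{baixados}, noting that each slice $D\pi_y(E^\sigma_{\widetilde{f}}(y))$ is $k$-dimensional. Since $\bar{x}$ and $\sigma$ were arbitrary, $\dim E^\sigma_f(\bar{x})=\dim E^\sigma_{\widetilde{f}}$ for all $\bar{x}$, which is precisely the asserted constancy.

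The only delicate point is the passage to the limit in the Grassmannian: a limit of $k$-dimensional subspaces could a priori drop dimension if the ambient dimension varied, but here all approximants sit in the fixed tangent space $T_{x_0}M$ and share the single dimension $k$ supplied by the first step, so the limit remains in $\mathrm{Gr}_k(T_{x_0}M)$ and no dimension is lost. As a consistency check, the transversality of the three limit bundles (from the angle estimates, together with uniqueness, Proposition \ref{uniqueness}) guarantees $E^s_f(x_0)\oplus E^c_f(x_0)\oplus E^u_f(x_0)=T_{x_0}M$, matching $s+c+u=\dim M$ and confirming that none of the three dimensions can degenerate.
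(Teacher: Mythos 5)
Your proof follows the same route as the paper: lift to the universal cover via Proposition \ref{pro 6}, use constancy of the dimensions for the partially hyperbolic diffeomorphism $\widetilde{f}$ on the connected manifold $\widetilde{M}$, and push back down through $D\pi$ using Proposition \ref{baixados}. The paper states this in two lines; you additionally justify the Grassmannian limit step (that the dimension cannot drop in the closure), which is a worthwhile detail the paper leaves implicit, but the argument is essentially identical.
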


\begin{proof}
Since $\widetilde{f}: \widetilde{M} \rightarrow \widetilde{M}$ is a partially hyperbolic diffeomorphism, then for $\widetilde{f},$ the dimensions of $E^s_f, E^c_f$ and $E^u_f$ are constant.  By Proposition $\ref{baixados}$  we obtain the result.
\end{proof}

We remark that not every orbit of $f$ can be reached by projection from the universal cover, because each $x \in M,$ the set $\pi^{-1}(\{x\})$ is countable, while the set  of bi infinite orbits of $x$ is non-countable, when $f$ is $n$ to one, $n \geq 2.$

\begin{definition}
Given an orthogonal splitting of the tangent bundle of $M,$ $TM=E\oplus F,$ and a constant $\beta>0,$
for any $x\in M$ we define the cone centered in $E(x)$ with angle $\beta$ as
$$
C(x, E(x), \beta)=\{v\in T_xM:\, ||v_F||\leq\beta ||v_E||,\, \mbox{where}\,v=v_E+v_F, v_E\in E(x), v_F\in F(x)  \}.
$$
\end{definition}

Now let $f:M\rightarrow M$ be a partially hyperbolic diffeomorphism. Fix $x\in M,$ give $\beta>0$ define the families of cones

\begin{align*}
&C^s(x,\beta)=C(x,E^s_f(x), \beta),\,\,\,\,\,\,\,\,\,\,\hspace{3cm} C^u(x,\beta)=  C(x,E^u_f(x), \beta),\\
&C^{cs}(x,\beta)= C(x,E^{s}_f(x)\oplus E^c_f(x), \beta),\,\,\,\,\,\,\,\,\,\,\, C^{cu}(x,\beta)= C(x,E^{c}_f(x)\oplus E^{u}_f(x), \beta).
\end{align*}

It is known that a diffeomorphism $f$ is  partially hyperbolic if, and only if, there is $0<\beta<1$ and constants
\mbox{$0<\nu<\gamma_1\leq\gamma_2<\mu$} with $\nu<1,$ $\mu>1$ and an adapted metric such that for all $x \in M$ we have:
\begin{align*}
Df^{-1}_x(C^{\sigma}(x,\beta))&\subset C^{\sigma}(f^{-1}(x),\beta),\,\, \sigma=s,cs, \nonumber\\
Df_x(C^{\psi}(x,\beta))&\subset C^{\psi}(f(x),\beta),\,\, \psi=u,cu \nonumber
\end{align*}
and
\begin{align*}\label{eq C2}
||Df^{-1}_x v||&>\nu^{-1}||v||,\,\, v\in C^s(x,\beta), \\
||Df^{-1}_x v||&>\gamma^{-1}||v||,\,\, v\in C^{cs}(x,\beta), \\\
||Df_x v||&>\mu||v||,\,\, v\in C^{u}(x,\beta), \\
||Df_x v||&>\gamma_1||v||,\,\, v\in C^{cu}(x,\beta).
\end{align*}

From Proposition \ref{pro 6} and cone condition, we obtain the following.

\begin{proposition}
The set of partially hyperbolic endomorphism is an $C^1$-open set.
\end{proposition}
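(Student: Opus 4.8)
The plan is to reduce the statement to the (known) openness of the cone characterization for partially hyperbolic diffeomorphisms, transported to the universal cover via Proposition \ref{pro 6}. As a preliminary remark, being a local diffeomorphism is itself a $C^1$-open condition: since $M$ is compact, the set of maps $g$ with $Dg_x$ invertible for every $x$ is $C^1$-open, so we may work inside a $C^1$-neighborhood consisting of local diffeomorphisms and focus on verifying partial hyperbolicity there.

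Fix a partially hyperbolic endomorphism $f$. By Proposition \ref{pro 6} its lift $\widetilde{f}\colon\widetilde{M}\to\widetilde{M}$ is a partially hyperbolic diffeomorphism (a genuine diffeomorphism by Proposition \ref{prop MP1}), so there exist $0<\beta<1$, constants $0<\nu<\gamma_1\le\gamma_2<\mu$ with $\nu<1$, $\mu>1$, and an adapted metric for which $\widetilde{f}$ satisfies the cone conditions recalled just before the proposition. The crucial point is that all of this data—the (lifted, hence adapted) metric, the cone fields $C^{\sigma}(\cdot,\beta)$, and the constants—descend from $M$ and are therefore invariant under the deck transformation group. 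Consequently the strict cone inequalities hold \emph{uniformly} over the non-compact manifold $\widetilde{M}$, with constants already realized on a single compact fundamental domain $K\subset\widetilde{M}$.

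Next I would perturb: let $g$ be $C^1$-close to $f$ on $M$. Because lifts are equivariant under deck transformations, $D\widetilde{g}$ is determined by its restriction to $K$ together with equivariance, so $C^1$-closeness of $g$ to $f$ on the compact base $M$ yields \emph{uniform} $C^1$-closeness of $\widetilde{g}$ to $\widetilde{f}$ over all of $\widetilde{M}$. Since the cone conditions are strict open inequalities and $\widetilde{f}$ realizes them with uniform slack on $K$, continuity of the derivative lets me choose a $C^1$-neighborhood $\mathcal{U}$ of $f$ so small that, for every $g\in\mathcal{U}$, the lift $\widetilde{g}$ still maps the same cone fields strictly into themselves and still satisfies the expansion/contraction estimates, with slightly weakened constants $0<\nu''<\gamma_1''\le\gamma_2''<\mu''$ preserving $\nu''<1<\mu''$. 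Hence $\widetilde{g}$ is again a partially hyperbolic diffeomorphism, and applying Proposition \ref{pro 6} in the reverse direction shows that $g$ is a partially hyperbolic endomorphism. Thus $\mathcal{U}$ is a $C^1$-neighborhood of $f$ contained in the set of partially hyperbolic endomorphisms, which gives openness.

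The step I expect to be the main obstacle is the transfer of \emph{uniformity} from the compact base $M$ to the non-compact cover $\widetilde{M}$: robustness of cone conditions is customarily a statement over compact manifolds, whereas $\widetilde{M}$ is not compact, so one must argue carefully that deck invariance genuinely reduces every estimate—both the cone inclusions and the norm bounds—to the fundamental domain $K$, and that the cone constants do not degenerate as one moves toward infinity. Once this equivariance/compactness reduction is in place, the rest is the standard $C^1$-robustness of cone conditions for diffeomorphisms.
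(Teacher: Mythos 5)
Your proposal is correct and follows essentially the same route as the paper: lift to the universal cover, invoke Proposition \ref{pro 6} to reduce to the cone characterization of partially hyperbolic diffeomorphisms, observe that $C^1$-closeness on $M$ gives $C^1$-closeness of the lifts, and conclude by the robustness of the cone conditions together with Proposition \ref{pro 6} in the reverse direction. Your treatment of the uniformity issue on the non-compact cover via deck-invariance and a compact fundamental domain is in fact more careful than the paper's own (very terse) argument, which takes that point for granted.
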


\begin{proof} Let $f: M \rightarrow M$ be a partially hyperbolic endomorphism. If $g$ is a local diffeomorphism $C^1-$close to $f,$ then in the lifts $\widetilde{f}$ and $\widetilde{g}$ are $C^1$ close in the universal cover $\widetilde{M}.$ If $g$ is enough $C^1-$close to $f,$ then $\widetilde{g}$ is enough close to $\widetilde{f}.$ By cone condition $\widetilde{g}$ is partially hyperbolic and by Proposition \ref{pro 6}, the local diffeomorphism $g$ is a partially hyperbolic endomorphism.

\end{proof}

\section{Proof of Theorem \ref{Teo C}  }
For a given $f:M \rightarrow M$  a partially hyperbolic endomorphism denote by $c(x)$ the number of center directions at $T_xM,$ similarly $u(x)$ denotes de number of unstable directions at $T_xM.$ Using the results of the previous section we are able to prove Theorem $\ref{Teo C}$ in the same lines to Theorem 1.4 of \cite{MT16}.
For the proof, we need the following proposition.

\begin{proposition}
Let $f:M\rightarrow M$ be a transitive partially hyperbolic endomorphism. Then for each $\sigma\in \{c,u\},$
either $f$ is a $\sigma-$special partially hyperbolic endomorphism or there is $x\in M$ such that $\sigma(x)=\infty.$

\end{proposition}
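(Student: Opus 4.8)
The plan is to recast the dichotomy as a statement about the super-level sets of the counting function and then to play openness (coming from continuity of the bundles on $M^f$) against forward invariance (coming from monotonicity) via transitivity. Fix $\sigma\in\{c,u\}$, write $\sigma(x)$ for the number of distinct $\sigma$-directions at $x$, and set $U_k=\{x\in M:\ \sigma(x)\ge k\}$. First I would show that each $U_k$ is open. If $x\in U_k$, choose orbits $\bar x_1,\dots,\bar x_k\in M^f$ over $x$ with pairwise distinct directions $V_j=E^\sigma_f(\bar x_j)$. For $x'$ near $x$, continue each $\bar x_j$ to an orbit $\bar x_j'$ over $x'$: the forward coordinates are $f^n(x')$, and the backward coordinates come from the local inverse branches of $f$ near the coordinates of $\bar x_j$. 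Then $\bar x_j'\to\bar x_j$ in $(M^f,\bar d)$ as $x'\to x$, so by the Continuity Proposition \ref{prop 07} we get $E^\sigma_f(\bar x_j')\to V_j$. Since finitely many distinct subspaces are at positive pairwise angle, for $x'$ close enough the $E^\sigma_f(\bar x_j')$ stay pairwise distinct, whence $\sigma(x')\ge k$.

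Next comes monotonicity: since $Df_x$ is a linear isomorphism carrying each $E^\sigma_f(\bar x)$ onto $E^\sigma_f(\bar f\bar x)$ (invariance in Definition \ref{def 1}), it sends distinct $\sigma$-directions over $x$ to distinct ones over $f(x)$, so $\sigma(f(x))\ge\sigma(x)$ and each $U_k$ is forward invariant, $f(U_k)\subset U_k$. Combining this with transitivity: if $U_k\neq\emptyset$ then for every nonempty open $V$ transitivity yields $n\ge0$ with $f^n(U_k)\cap V\neq\emptyset$, and forward invariance gives $f^n(U_k)\subset U_k$, so $U_k\cap V\neq\emptyset$; thus a nonempty $U_k$ is dense. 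Consequently, once we know $U_k\neq\emptyset$ for every $k$, the $U_k$ form a nested family of dense open sets, and by Baire the intersection $\bigcap_k U_k$ is residual, in particular nonempty, and every point of it has $\sigma(x)=\infty$. It therefore suffices to prove that if $f$ is not $\sigma$-special, i.e.\ $U_2\neq\emptyset$, then $U_k\neq\emptyset$ for all $k$.

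This last step is the main obstacle, and the idea is to multiply directions using the non-invertibility of $f$. Starting from $x$ with two distinct directions, I would descend along the two backward orbits realizing them to the coordinate where they first split; injectivity of $Df$ along their common segment shows the splitting survives, producing a genuine branching point $q$: a point with two distinct preimages $r_1\neq r_2$ whose directions push forward to two distinct directions of $q$. To manufacture a third direction I would use transitivity to bring a preimage of $r_1$ back near $q$: since transitivity makes $\bigcup_{N} f^{-N}(r_1)$ dense, there is $\hat r$ close to $q$ with $f^{N+1}(\hat r)=q$. Because the branching condition is open (the same continuation argument as for $U_k$), $\hat r$ inherits a branching, and pushing its two branches forward by $Df^{N+1}$ creates new directions of $q$ arriving through the preimage $r_1$. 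Iterating this return construction compounds branchings along the backward orbit of a single point and is meant to drive $\sigma$ through every value of $k$.

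The delicate point, and where I expect the real work, is guaranteeing that the directions produced at each stage are genuinely new rather than colliding with previously produced ones: directions arriving through different preimages, or through the same preimage at different return times, could coincide a priori. Here I would exploit Proposition \ref{prop angulo-zero}: since the angle between any two $\sigma$-directions at a point collapses under forward iteration, the freshly returned branches, carried forward a large number $N$ of steps, become strongly controlled, so by selecting the return times along a suitable increasing sequence one can separate each newly created direction from the finitely many already present. Making this non-collision estimate uniform, so that each return strictly increases the count, is the crux; granting it, the Baire argument above finishes the proof and in fact yields the stronger residual conclusion needed for Theorem \ref{Teo C}.
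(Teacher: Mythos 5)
Your reduction is sound and matches the paper's toolkit: openness of $U_k=\{\sigma\ge k\}$ via continuity of the bundles on $M^f$, monotonicity $\sigma(f(x))\ge\sigma(x)$ from invariance of the splitting, density of a nonempty forward-invariant open set under transitivity, and Baire. All of that is correct. But the proof stops exactly where the real content lies: you never actually prove that $U_2\neq\emptyset$ forces $U_k\neq\emptyset$ for all $k$. Your ``return construction'' (pulling a preimage of $r_1$ back near the branching point $q$ and pushing its branches forward) leaves open precisely the possibility you name yourself, that the directions so produced coincide with ones already counted, and you end with ``granting it, the Baire argument finishes the proof.'' That is a concession, not an argument. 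Worse, your proposed fix invokes Proposition \ref{prop angulo-zero} in the wrong direction: that proposition says forward iteration \emph{collapses} the angle between any two $\sigma$-directions, so it cannot be used to ``separate each newly created direction from the finitely many already present.''

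The missing idea, which is how the paper closes the argument, is to use the collapse \emph{against} the pushed-forward directions rather than for the new ones. Fix a point $x$ with two $\sigma$-directions at angle $\theta>0$ and, by continuity, a neighbourhood $U_x$ in which \emph{every} point carries two $\sigma$-directions at angle $>\theta/2$. Take a point $x_0$ with dense forward orbit and choose a return time $n_1$ so large that, by Proposition \ref{prop angulo-zero}, all the $Df^{n_1}_{x_0}$-images of the finitely many directions in $\mathcal{E}^\sigma_f(x_0)$ lie within angle $\theta/3$ of one another, while $f^{n_1}(x_0)\in U_x$. Since $f^{n_1}(x_0)$ has two directions at angle $>\theta/2$, at most one of them can be a pushed-forward direction, so $\sigma(f^{n_1}(x_0))\ge\sigma(x_0)+1$; iterating along successive returns to $U_x$ drives the count to infinity (in the paper this is run as a contradiction against a uniform bound $k_0$ obtained from Baire applied to the closed sets $\Lambda_k=\{\sigma\le k\}$). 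Note that this argument needs no non-collision bookkeeping among newly created directions and no descent to branching preimages; the fixed angle $\theta/2$ supplied by the neighbourhood $U_x$ does all the work. Without this step (or an equivalent one), your proposal does not establish the proposition.
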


\begin{proof}
Fix $\sigma\in \{c,u\}.$ We assume that $\sigma(x)<\infty$ for all $x\in M.$ Define the sets
$\Lambda_k=\{x\in M; \sigma(x)\leq k\}.$ Using a continuity argument (Proposition \ref{prop 07}), the set  $M\setminus \Lambda_k$ is open, then for each $k$  the set $\Lambda_k$ is closed.
 As
$$
M=\bigcup_{k=1}^{+\infty}\Lambda_k,
$$
by Baire Category Theorem there is $k_0$ such that $int(\Lambda_{k_0})\neq \emptyset.$

We claim that $M=\Lambda_{k_0}.$
To prove the claim, consider any $x\in M$ with $l>0,$  $\sigma-$directions and $V_x$ a neighbourhood
of $x$ such that any point in $V_x$ has at most $l$ $\sigma-$directions. Consider a point
in $V_x$ with dense orbit and take an iterate of it that belongs to $int(\Lambda_{k_0}).$
Since $x \mapsto\sigma(x)$ is non decreasing along the orbits it implies that $l\leq k_0.$ We get

$$
M=\bigcup_{k=1}^{k_0}\Lambda_k
$$
by connectedness of $M$ we conclude $M=\Lambda_{k_0}.$

Now we prove that $M=\Lambda_{1}.$ Suppose that, there is $x\in M$ such that \mbox{$\sigma(x)\geq 2$}
and choose $E^{\sigma}_1(x),$ $E^{\sigma}_2(x)$ two different $\sigma-$directions of $x.$
Let $\theta>0$ be the angle between  $E^{\sigma}_1(x)$ and $E^{\sigma}_2(x).$
Consider $U_x$ a small neighbourhood of $x$ such that for all $y\in U_x$ has at least two $\sigma-$directions.
Consider $E^{\sigma}_1(y)$ and $E^{\sigma}_2(y)$ such that $\angle(E^{\sigma}_1(y),E^{\sigma}_2(y))>\frac{\theta}{2}.$

Let $x_0$ be a point with dense orbit. By denseness there is $n_1$ be a large number satisfying:

\begin{itemize}
\item[(i)] $f^{n_1}(x_0)\in U_x,$
\item[(ii)] $\angle(Df^{n_1}_{x_0}(E),Df^{n_1}_{x_0}(F))<\frac{\theta}{3},$ for any $E,F\in \mathcal{E}^{\sigma}_f(x_0).$
\end{itemize}

By definition of $U_x$ these two properties imply  $\sigma(f^{n_1}(x_0))>\sigma(x_0)+1,$ otherwise there wouldn't be two $\sigma-$directions such that the angle between them it was bigger than $\frac{\theta}{2}.$

Repeating this argument, it is possible to obtain an infinite sequence $f^{n_k}(x_0)$ such that $\sigma(f^{n_{k+1}}(x_0))>\sigma(f^{n_k}(x_0))+1, k =0,1,\ldots,$ where $n_0 = 0,$ it contradicts $M=\Lambda_{k_0}$ with $k_0>1.$

\end{proof}

To finish the proof of the theorem, we need to prove that $\sigma(x)=\infty,$ for a residual
set $\mathcal{R}\subset M,$ whenever $f$ is not $\sigma-$special partially hyperbolic endomorphism.
In fact, suppose that there is $x\in M,$ such that $\sigma(x)=\infty.$ Given $k>0,$ fix exactly $k$
different directions at $x,$ and by continuity of $ \bar{x} \in M^f \mapsto E^{\sigma}_f(\bar{x}),$ consider $U_x^k$ a neighbourhood of $x,$ such that $\sigma(y)\geq k,$ for all $y\in U^k_x.$
Since $f$ is transitive, the open set $V^k=\displaystyle\bigcup_{i\geq 1}f^i(U^k_x)$ is dense in $M.$ Finally,
consider $\mathcal{R}=\displaystyle\bigcap_{k\geq 1}V^k, $ which is a residual set. By construction, given $x\in\mathcal{R}$
we obtain $\sigma(x)\geq k$ for all $k>1,$ which implies $\sigma(x)=\infty.$


\section{Proof of Theorem\ref{Teo D} }
 Evidently there are partially hyperbolic endomorphisms $g: M \rightarrow M,$ such that $deg(g) \geq 3.$ To see this, it is enough $g = f_0^{2},$ where $deg(f_0) \geq 2.$ By the same way, we can obtain partially hyperbolic endomorphisms with degree bigger than $k,$ for any $k \geq 3.$ For simplicity we give the detailed proof for case $n = 3.$

\begin{proof}[Proof of Theorem B] Since $f$ is a covering map there is $\tau > 0$ such that $f(x) = f(y),$ then $d(x, y) \geq \tau.$

Consider $A: M \rightarrow M$ a $c-$special partially hyperbolic endomorphism. Suppose that $deg(A)\geq 3.$ Consider $x$ a non fixed point for $A.$ Since $x$ is not fixed, then any pre-image of $x$ is not fixed too. So the point $x$ has three distinct pre-images, each one of them different from $x.$

Let $x_1, x_2, x_3$ be three distinct pre-images of $x,$ of course  $x_i \neq x,$ $i =1,2,3.$ Choose $0 < r_1, r_2 < \frac{\tau}{3}$ and balls $B_i = B(x_i, r_i), i = 1,2,$ such that $x \notin B_i,$ $ i =1,2.$ By previous lemma $B(x_1, r_1) \cap B(x_2, r_2) = \emptyset.$  Moreover, by previous lemma any $z \in M$ has a pre-image in the complement of $B_1 \cup B_2,$ otherwise, as $deg(A) \geq 3,$ there would be two pre-images of $z$ in the same $B_i,$ it would be a contradiction with the construction of $B_i.$  Thus it is possible construct inverse branches of $x_1, x_2, x_3$ on the complement of  $B_1 \cup B_2.$  For each $i = 1,2,3$ consider $( \ldots, x_{i2}, x_{i1}, x_i)$ a inverse branch of $x_i$ out of $B_1 \cup B_2,$ such that $A(x_{ik}) = x_{ik-1}, k \geq 1$ and $x_{i0} = x_i.$ Let $\bar{x}_i, i=1,2,3$ be the orbit constructed such that in zero coordinate is the point $x_i,$ and the negative coordinates are out of $B_1 \cup B_2.$ We write $E^{cu}_A(x_i)$ to designate the center unstable at $x_i$ corresponding the orbit $\bar{x}_i, i =1,2,3.$

Let $\varphi: M \rightarrow M $ be a $C^{\infty} $ diffeomorphism $\varepsilon-C^1-$close to identity such that

\begin{itemize}
\item $\varphi$ is the identity out of $B_1 \cup B_2,$
\item $\varphi(x_i) = x_i,$
\item $\varphi(B_i) = B_i, i=1,2,$
\item $E^{cu}_i:= D\varphi_{x_i} E^{cu}_A(x_i) \neq E^{cu}_A(x_i), i =1,2.$
\item $DA_{x_1} E^{cu}_1 \cap DA_{x_2}E^{cu}_2 \cap DA_{x_3}E^{cu}_A(x_3)  = \{0\}.$
\end{itemize}

To construct $\varphi$ we can use small rotations in local coordinates, such that they fix $x_i, i=1,2,$ after we glue these rotations with identity out of $B_1 \cup B_2$ by a partition of unity.  Since $\varphi$ can be constructed arbitrarily $C^1$ close to the identity, then $f := A \circ \varphi$ is $\varepsilon-C^1-$close to $A.$

If $f$ is $\varepsilon-C^1-$close to $A,$ then $f$ is partially hyperbolic too. Let $\bar{x}_i$ be the orbit constructed such that in zero coordinate is the point $x_i,$ and the negative coordinates are out of $B_1 \cup B_2.$

By construction, relative to  $\bar{x}_i$ we obtain $E^u_f(x_i) = E^u_A(x_i), i =1,2,3.$ It is because the inverse branches taken of $x_1$ and $x_2$ are totally out of $B_1 \cup B_2,$ so we act only with $A$ in all pre-images. Using the construction of invariant sub bundles via cones considering inverse branches of $x_i, i=1,2,3,$ we obtain the same center unstable sub bundle obtained for $A.$

 Moreover

$$Df_{x_i} \cdot E^{cu}_f({x_i}) = DA_{x_i}  D\varphi_{x_i} E^{cu}_A(x_i) = DA_{x_i} E^{cu}_i, i = 1,2, $$
$$Df_{x_3} \cdot E^{cu}_f({x_3}) =DA_{x_3}  \cdot E^{cu}_A(x_3) = E^{cu}_A(x). $$

 In this way, at $x$ we distinguish three center unstable directions $F_1 = DA_{x_1} E^{cu}_1,$
 \mbox{$F_2 =DA_{x_2} E^{cu}_2$} and  $F_3 = E^{cu}_A(x) $ such that
 $$F_1 \cap F_2 \cap F_3 = \{0\}. $$

%
%

Since  $F_1 \cap F_2 \cap F_3 = \{0\}, $ there is no a non null subspace common to $F_1, F_2$ and $ F_3 .$ Thus, for the point $x$ is defined at least two center directions.  Note that if we had only one center direction defined for $x,$ it would be a non-trivial subspace common to $F_1,F_2$ and $F_3.$ With the same argument we conclude that there are at least two strong unstable directions defined for $x.$ The same argument guarantees that there are at least two unstable directions of $x.$ Thus $f$ is neither $c$ nor $u$ special.

If we start with $A$ a $u-$special partially hyperbolic endomorphism with $deg(A) \geq 3,$ we find $f,$ $C^1-$close to $A,$ such that $f$  is neither $c$ nor $u$ special.  The denseness part of Theorem \ref{Teo D} is proved. Let us prove the openness.

Consider $f$ a non $c-$special partially hyperbolic endomorphism and $x$ a point such that is defined at least two center bundles. Since the lift $\widetilde{f}$ is partially hyperbolic there is two points $x_1$ and $x_2$ such that $\pi(x_1) = \pi(x_2) = x,$ such that for $\widetilde{f}$ is defined  center directions of   $E^c_1$ of $x_1$ and $E^c_2$ of $x_2$ such that they  project on different center bundles of $x.$ By continuity of the center bundles (for partially hyperbolic diffeomorphisms) if $g$ is  $C^1$ sufficiently close to $f$ the $\widetilde{g}$ is $C^1-$close to $\widetilde{f}.$ So, for $\widetilde{g}$  is defined  center directions of   $F^c_1$ of $x_1$ and $F^c_2$ of $x_2$ such that they  project on different center bundles of $x.$ Therefore any $g$ that is $C^1$  sufficiently close of $f$ is not $c-$special. The same argument hods changing non $c-$special by non $u-$special.

When $n \geq 3,$ we perturb a partially hyperbolic endomorphism $A$ with degree at least $n.$ The perturbation from $A$ is done around $n-1$ pre-images of a point $x$ by $A.$ At the level $T_xM$ we can create $n$ mutually distinct $(cu-)$sub bundles with dimension at most $n-1,$ so that the intersection of them is trivial. For the obtained $f$ from this perturbation, the conclusion here is the same one when $n=3.$

\end{proof}

\section{ Applications: Proof of Corollaries \ref{Teo B} and \ref{Teo E}}

Here we present some applications of the main Theorems \ref{Teo C} and \ref{Teo D}.

\begin{proof}[Proof of Corollary \ref{Teo B}]
By Theorem \ref{Teo D}, the endomorphism $A$ can be $C^1$ approximated by partially hyperbolic endomorphisms which are neither $wu$ nor $uu$ special, a such $f,$  $\varepsilon -C^1-$close to $A$ we know that $f$ is transitive by \cite{AH}.

By Theorem \ref{Teo C} for a such $f$ there are residual sets $R_{uu}$ and $R_{wu}$ such that $z \in R_{\sigma}$ then at $z$ is defined infinitely many  $E^{\sigma}_f$ sub bundles, $\sigma \in \{wu, uu\}.$ Take the residual set $R = R_{wu} \cap R_{uu}.$

Given $f$ as above consider $E^c_f = E^{wu}_f$ and $E^u_f = E^{uu}_f$ the directions of partially hyperbolic definition.

Every $f$ as  above is such that for any $E^{cu}_f(x)$ of a given $x,$ is defined a tangent submanifold $W^{cu}_f(x),$ which is tangent to a direction $E^{cu}_f$ in each point. In fact,  $E^{cu}_f(x)$ is an unstable direction of Anosov endomorphism definition and  $W^{cu}_f(x),$ the corresponding unstable manifold, see \cite{PRZ}. As we proved for each point is defined a unique $E^{cs}_f,$ moreover in the cover $\mathbb{R}^3,$ it is integrable to a foliation $W^{cs}_{\widetilde{f}}$ which projects to $W_f^{cs}-$foliation of $f$ in $\mathbb{T}^3,$ by \cite{B}.

If we take $ W^{cu}_f(x) \cap W^{cs}_f(x) = W^c_f(x),$ it is a submanifold tangent to $E^{c}_f(x)$ given by $ E^{cu}_f(x)\cap E^{cs}_f(x).$  Thus each $E^c_f$ at $x$ admits a tangent submanifold $W^c_f(x)$, such that $T_xW^c_f(x) = E^c_f(x)$ and for each $z \in W^c_f(x) \setminus \{x\}, $ holds $T_zW^c_f(x) = E^c_f(z),$ for some $E^c_f(z).$

We conclude that at $x$ are defined infinitely many submanifolds which are tangent to directions $E^c_f$ in each point. It makes impossible a center leaf conjugacy between $f$ and $A.$
\end{proof}

\begin{figure}[!h]
\centerline{
\includegraphics[width=10.8cm]{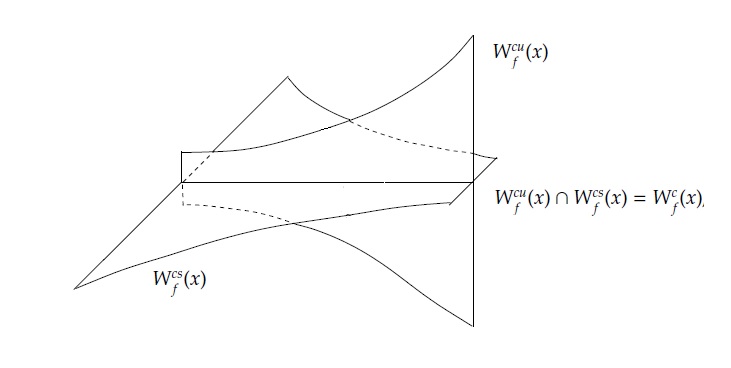}}
\caption{{\small A center leaf at $x$ tangent to a $E^c_f(x)$}.}
\label{figsemi}
\end{figure}

This corollary is a contrast with Theorem B  of \cite{HH21}, which asserts that a partially hyperbolic endomorphism of $\mathbb{T}^2$ with hyperbolic linearization is center leaf conjugated with its linear part.

\begin{proof}[Proof of Corollary \ref{Teo E}]  By similar approach of Theorem \ref{Teo D} we can find $f$ $C^1$ sufficiently close to $A,$ such that $f$ is not $cu-$special. In this way, $A$ is not in the $C^1$ interior of special partially hyperbolic endomorphisms.

\end{proof}

\section{Further comments and questions }

In \cite{PRZ} Przytycki proved that for a given linear Anosov endomorphism of the torus $A: \mathbb{T}^m \rightarrow \mathbb{T}^m ,$  for any fixed $x \in \mathbb{T}^m$ and $\varepsilon > 0, $ there is $f: \mathbb{T}^m \rightarrow \mathbb{T}^m,$ $\varepsilon-C^1$ close to $A,$ such that at $x,$ the map $f$  has defined uncountable unstable directions, in fact, these directions describe a curve in the Grassmannian space. An interesting question is if the points of $\mathcal{R}$ in Theorem \ref{Teo C} are such that for them are defined uncountable $\sigma$ directions. Moreover, in Theorem \ref{Teo C} could be $\mathcal{R} \neq M ?$

For a partially hyperbolic endomorphism $f:M \rightarrow M$  we can also make questions about invariant manifolds tangent to $E^s_f, E^c_f,$ and $E^u_f.$  Suppose that  $\widetilde{f}$ is dynamically coherent. For each $x \in M$ we can consider the center leaves at $x$ by projections of center leaves $\pi(\mathcal{F}^c_{\widetilde{f}}(z)),$ such that $\pi(z) = x.$ All of these projected leaves are homeomorphic one each other? For a dynamically coherent partially hyperbolic endomorphism $f: M \rightarrow M,$ could there be local submanifolds tangent to  $E^c_f$ in each point, such that these submanifolds are not limit of projections of local center leaves of $\widetilde{f}\,?$ Even the study of dynamical coherence and some kind of generalization of leaf conjugacy could be interesting in this setting.

There are a lot of questions that one could explore in this setting. For instance, to understand ergodicity for conservative partially hyperbolic endomorphisms and connections with ergodicity and accessibility in this context. Existence of SRB measures and measures of maximal entropy and their connections with questions involving rigidity, absolute continuity of center foliations (when it makes sense), etc. Recent advances in this setting have been done. For measures of maximal entropy, see \cite{cant} and for absolute continuity of center foliation, see \cite{micnovo}.


\end{document}